\theoremstyle{plain}
\def\hsymb#1{\mbox{\strut\rlap{\smash{\Huge$#1$}}\quad}}
\newtheorem{theorem}{Theorem}[section]
\newtheorem{lemma}[theorem]{Lemma}
\newtheorem{corollary}[theorem]{Corollary}
\newtheorem{proposition}[theorem]{Proposition}
\newtheorem{definition}[theorem]{Definition}
\newtheorem{remark}[theorem]{Remark}
\newtheorem{example}[theorem]{Example}
\numberwithin{equation}{section}
  \def\varddots{\mathinner{\mkern1mu
      \raise\p@\hbox{.}\mkern2mu\raise4\p@\hbox{.}\mkern2mu
      \raise7\p@\vbox{\kern7\p@\hbox{.}}\mkern1mu}}
\renewcommand{\a}{\alpha}
\newcommand{\la}{\lambda}
\def\R{{\mathbb{R}}}
\def\N{{\mathbb{N}}}
\def\C{{\mathbb{C}}}
\def\1{{\mathbf{1}}}
\begin{document}
\title
{Eigenvalue problem for some special class of anti-triangular matrices}
\author{Hiroyuki Ochiai, Makiko Sasada, Tomoyuki Shirai \\
and Takashi Tsuboi}

\maketitle

\begin{abstract}
We study the eigenvalue problem for some special class of anti-triangular matrices. Though the eigenvalue problem is quite classical, as far as we know, almost nothing is known about properties of eigenvalues for anti-triangular matrices. In this paper, we show that there is a nice class of anti-triangular matrices whose eigenvalues are given explicitly by their elements. Moreover, this class contains several interesting subclasses which we characterize in terms of probability measures. We also discuss the application of our main theorem to the study of interacting particle systems, which are stochastic processes studied in extensive literature. 
\end{abstract}

\footnote{
\hskip -6mm
\textit{MSC: primary 15A18, 15B51  secondary 60K35.}}

\footnote{
\hskip -6mm
\textit{Keywords and phrases: eigenvalue, anti-triangular matrix, stochastic matrix, interacting particle systems.}}

\section{Introduction}

The aim of this paper is to study the eigenvalue problem for anti-triangular matrices. In general, it is impossible to express the eigenvalues of a given matrix explicitly in terms of its elements. Obviously, for a triangular matrix, the eigenvalues are given by its diagonal elements. On the other hand, for an anti-triangular matrix, such a simple relation seems hard to expect. However, in this paper, we show that for a certain class of anti-triangular matrices, similar property holds. Moreover, this class contains several interesting subclasses which we characterize in terms of probability measures.

The eigenvalue problem is one of the most classical and well-studied topics in wide fields of mathematics. In the theory of Markov chains, an estimate of the eigenvalues of a transition matrix is essential to know the speed of convergence to an invariant measure (or stationary distribution). Recently, in the study of interacting particle systems, which are continuous time Markov processes with discrete state space, it is shown that an estimate of the eigenvalues of some special matrix obtained from an invariant measure is also useful to know the speed of convergence to the invariant measure. This matrix is, by the way of construction, an anti-triangular, stochastic and \lq\lq symmetric" (in the sense defined in this paper) matrix. Our original motivation was to estimate the eigenvalue of this type of matrix. Therefore, we also discuss what is shown for this original case by our main theorem in the last part of the paper.       

\section{Notations and main results}

\subsection{General case}


We denote by $M(n+1,\C)$ the $\C$-linear space of square matrices of size $n+1$ and by $M_{L}(n+1,\C)$ the $\C$-linear space of lower triangular matrices of size $n+1$.
Let $G=(g_{ij})_{i,j=0}^n \in M(n+1, \C)$ be the matrix with nonzero elements $g_{i,n-i}=1, i=0,1,\dots, n$; 
\begin{equation}\label{upper}
G = \left(
\begin{array}{ccccc}
&&&& 1  \\
 \hsymb{0}   && &   1 \\
&&   1 &  & \\
& \varddots &&& \\
1 &&&& \hsymb{0}
\end{array}
\right).
\end{equation}
Note that $G^{-1}=G$.

\begin{definition}
A matrix $X =(x_{ij})_{i,j=0}^n \in M (n+1,\C)$ is called a \lq\lq lower anti-triangular matrix" if $x_{ij}=0$ for $i+j < n$ ;

\begin{equation}\label{upper}
X = \left(
\begin{array}{ccccc}
&&&& x_{0,n}  \\
 \hsymb{0}   &&& \ x_{1,n-1} &  \\
&&  \varddots &&\\
& \varddots &&& \\
x_{n,0}&&&& \hsymb{*}
\end{array}
\right)
\end{equation}
In other words, a matrix $X =(x_{ij})_{i,j=0}^n \in M (n+1,\C)$ is \lq\lq lower anti-triangular" if and only if $X= \tilde{X}G$ for some $\tilde{X} \in M_L(n+1,\C)$. 
\end{definition}
Analogously, we can define upper anti-triangular matrices. The eigenvalue problem for the upper anti-triangular matrices is equivalent to that for the lower anti-triangular matrices since for any upper anti-triangular matrix $X$, $G^{-1}XG$ is lower anti-triangular and vice versa. Therefore, from now on, we only consider the lower anti-triangular matrices and as long as it is not stated otherwise, \lq\lq anti-triangular" always means \lq\lq lower anti-triangular" and \lq\lq triangular" always means \lq\lq lower triangular".

Generally, it is not easy to express the eigenvalues of an anti-triangular matrix in terms of its elements. On the other hand, there are interesting examples for which we are able to do this.

\begin{example}\label{exm:TS}
Let $T=(t_{ij})_{i,j=0}^n$ and $S=(s_{ij})_{i,j=0}^n \in M_L(n+1,\C)$ be matrices with elements $t_{ij}=\frac{1}{i+1} \1_{\{i \ge j \}}$ and $s_{ij}=\binom{i}{j}\frac{1}{2^i}\1_{\{i \ge j \}}$. Then, $TG$ and $SG$ are anti-triangular matrices written as
\begin{equation}\label{upper}
T G = \left(
\begin{array}{ccccc}
&&&& 1  \\
 \hsymb{0}   &&& \ \frac{1}{2} & \frac{1}{2}  \\
&&  \frac{1}{3} & \frac{1}{3} & \frac{1}{3}  \\
& \varddots &&& \vdots  \\
\frac{1}{n+1} &   \frac{1}{n+1} & \dots &&  \frac{1}{n+1}
\end{array}
\right)
\end{equation}
and 
\begin{equation}\label{upper}
S G = \left(
\begin{array}{ccccc}
&&&& 1  \\
 \hsymb{0}   &&& \ \frac{1}{2} & \frac{1}{2}  \\
&&  \frac{1}{4} & \frac{1}{2} & \frac{1}{4}  \\
& \varddots &&& \vdots  \\
\frac{1}{2^n} &   \frac{n}{2^n} & \dots & \frac{\binom{n}{k}}{2^n}  \dots & \frac{1}{2^n}
\end{array}
\right).
\end{equation}




Remarkably, the eigenvalues of $TG$ are 
\[ 
\Big( 1,-\frac{1}{2},\frac{1}{3},-\frac{1}{4},\dots, (-1)^n\frac{1}{n+1} \Big) = \Big(  t_{00},- t_{11}, t_{22},-t_{33},\dots, (-1)^n t_{nn} \Big) 
\]
and the eigenvalues of $SG$ are
\[
\Big(  1,-\frac{1}{2},\frac{1}{4},-\frac{1}{8},\dots, (-1)^n\frac{1}{2^n} \Big)  = \Big(  s_{00},- s_{11}, s_{22},-s_{33},\dots, (-1)^n s_{nn} \Big) .
\] 
We have a proof for this fact in Example \ref{ex:TSproof}.
\end{example}

Now, we can expect that a similar property holds for some class of triangular matrices. We first make clear the property we will study.

\begin{definition}
We say that $X =(x_{ij}) \in M_L(n+1,\C)$ has \lq\lq anti-diagonal eigenvalue property" if $XG$ is similar to $diag(x_{00},- x_{11}, x_{22},\dots, (-1)^n x_{nn})$. 
\end{definition}

\begin{definition}
We say that $X =(x_{ij}) \in M(n+1,\C)$ has \lq\lq weak anti-diagonal eigenvalue property" if $\mathrm{det}(\lambda I - XG) = \prod_{i=0}^n (\la - (-1)^i x_{ii} )$ where $I$ is the identity matrix.
\end{definition}

As stated above, $T$ and $S$ have anti-diagonal eigenvalue property. Obviously, if a matrix $X$ has anti-diagonal eigenvalue property, then it has weak anti-diagonal eigenvalue property. On the other hand, if a matrix $X =(x_{ij})$ has weak anti-diagonal eigenvalue property and satisfies $(-1)^i x_{ii} \neq (-1)^j x_{jj}$ for all $i \neq j$, then $X$ has anti-diagonal eigenvalue property. We also remark that the statement \lq\lq the set of eigenvalues of $XG$ is equal to $\{(-1)^i x_{ii} \ ;\ i=0,1, \dots,n\}$" is not equivalent to \lq\lq $XG$ has anti-diagonal eigenvalue property" nor \lq\lq $XG$ has weak anti-diagonal eigenvalue property".

To characterize triangular matrices having (weak) anti-diagonal eigenvalue property, we first study the cases $n=1$ and $2$.

\begin{lemma}\label{lem:n=1}
If $n=1$, then a triangular matrix $X$ has \lq\lq weak anti-diagonal eigenvalue property" if and only if there exists $a,b \in \C$ such that
\begin{equation}
XG = \left(
\begin{array}{cc}
 0 & a  \\
 b & a-b
\end{array}
\right).
\end{equation}
\end{lemma}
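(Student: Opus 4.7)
The plan is to reduce the statement to a direct $2\times 2$ computation, since the weak anti-diagonal eigenvalue property for $n=1$ is just an equality of two monic quadratic polynomials.

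First I would write the general lower triangular $X \in M_L(2,\C)$ as
\[
X = \begin{pmatrix} x_{00} & 0 \\ x_{10} & x_{11} \end{pmatrix},
\]
and compute
\[
XG = \begin{pmatrix} x_{00} & 0 \\ x_{10} & x_{11} \end{pmatrix}\begin{pmatrix} 0 & 1 \\ 1 & 0 \end{pmatrix} = \begin{pmatrix} 0 & x_{00} \\ x_{11} & x_{10} \end{pmatrix}.
\]
So every anti-triangular matrix of size $2$ already has a zero in the $(0,0)$ entry, which matches the form in the conclusion. This observation also tells me what $a$ and $b$ should be: $a = x_{00}$ and $b = x_{11}$.

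Next I would compare characteristic polynomials. A direct expansion gives
\[
\det(\lambda I - XG) = \lambda(\lambda - x_{10}) - x_{00}x_{11} = \lambda^2 - x_{10}\lambda - x_{00}x_{11},
\]
while the weak anti-diagonal eigenvalue property requires
\[
\det(\lambda I - XG) = (\lambda - x_{00})(\lambda + x_{11}) = \lambda^2 - (x_{00}-x_{11})\lambda - x_{00}x_{11}.
\]
Matching the coefficient of $\lambda$ gives the single condition $x_{10} = x_{00} - x_{11}$, while the constant terms agree automatically. Setting $a := x_{00}$ and $b := x_{11}$, the weak anti-diagonal eigenvalue property is thus equivalent to
\[
XG = \begin{pmatrix} 0 & a \\ b & a-b \end{pmatrix},
\]
which is the claim. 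Conversely, any matrix of this form clearly has characteristic polynomial $(\lambda - a)(\lambda + b)$, proving the reverse implication.

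There is essentially no obstacle here: the $n=1$ case has only one free off-diagonal parameter $x_{10}$, and the characteristic polynomial has only one nontrivial coefficient to match, so the constraint is forced. I would expect this lemma to serve mainly as the base case (and as motivating notation $a,b$) for the more substantive $n=2$ analysis that must come next.
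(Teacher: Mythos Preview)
Your proof is correct and follows essentially the same approach as the paper: write the general lower triangular $X$, compute $XG$, expand $\det(\lambda I - XG)$, and match it against $(\lambda - x_{00})(\lambda + x_{11})$ to force $x_{10} = x_{00} - x_{11}$. The paper's version is just a condensed one-line form of exactly this computation, using the labels $a,b,c$ for the entries of $X$.
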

\begin{proof}
Let 
\begin{equation}
X = \left(
\begin{array}{cc}
 a & 0  \\
 c & b
\end{array}
\right)
\end{equation}
and solve the equation $\mathrm{det}(\lambda I - XG)=\lambda(\la-c)-ab=(\lambda-a)(\lambda-(-b))$ for any $\la \in \C$, then we have $c=a-b$.
\end{proof}

In particular, for any given pair of numbers $(\lambda_0,\lambda_1) \in \C^2$, there exists a unique triangular matrix $X$ satisfying $(x_{00},x_{11})=(\la_0,\la_1)$ and having \lq\lq weak anti-diagonal eigenvalue property".

\begin{lemma}\label{lem:n=2}
If $n=2$, then a triangular matrix $X$ has \lq\lq weak anti-diagonal eigenvalue property" if and only if there exists $a,b,c \in \C$ and $p,q \in \C$ satisfying $pq=2(a-b)(b-c)$ such that
\begin{equation}
XG = \left(
\begin{array}{ccc}
 0 & 0 & a   \\
 0 & b & p \\
 c & q & a-2b+c
\end{array}
\right).
\end{equation}
\end{lemma}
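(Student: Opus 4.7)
The approach is essentially a direct generalization of the argument for $n=1$: write the generic lower triangular matrix, compute the characteristic polynomial of $XG$ explicitly, and match coefficients with the claimed factorization $(\lambda-a)(\lambda+b)(\lambda-c)$.

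More concretely, I would parameterize
\begin{equation*}
X = \begin{pmatrix} a & 0 & 0 \\ p & b & 0 \\ r & q & c \end{pmatrix}, \qquad \text{so} \qquad XG = \begin{pmatrix} 0 & 0 & a \\ 0 & b & p \\ c & q & r \end{pmatrix},
\end{equation*}
using the identity $(XG)_{ij}=x_{i,n-j}$ implied by the definition of $G$. Here $a,b,c$ are the diagonal entries of $X$ (i.e.\ $x_{00},x_{11},x_{22}$), and $p,q,r$ are $x_{10},x_{21},x_{20}$ respectively. What remains is then to determine which relations on $p,q,r$ are imposed by the weak anti-diagonal eigenvalue property.

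Expanding $\det(\lambda I - XG)$ along the first row gives
\begin{equation*}
\det(\lambda I - XG) = \lambda\bigl[(\lambda-b)(\lambda-r)-pq\bigr] - a\cdot c(\lambda-b) = \lambda^3 - (b+r)\lambda^2 + (br - ac - pq)\lambda + abc,
\end{equation*}
while on the other hand
\begin{equation*}
(\lambda-a)(\lambda+b)(\lambda-c) = \lambda^3 + (b-a-c)\lambda^2 + (ac-ab-bc)\lambda + abc.
\end{equation*}
Comparing coefficients of $\lambda^2$ forces $r = a-2b+c$, which places the expected entry in the $(2,2)$-slot of $XG$. Comparing coefficients of $\lambda$ then gives $pq = br - ac - (ac-ab-bc) = b(a-2b+c) - 2ac + ab + bc$, and a one-line simplification shows this equals $2(a-b)(b-c)$. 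The constant terms automatically match. Conversely, if $r$ and $(p,q)$ are chosen to satisfy these two relations, then by construction the characteristic polynomial factors as required.

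The computation is routine; there is no conceptual obstacle. The only thing one needs to be a bit careful about is the sign pattern $(+,-,+)$ in $(-1)^i x_{ii}$, which is what introduces the $+b$ factor in the target polynomial and ultimately the factor $2$ in $pq = 2(a-b)(b-c)$.
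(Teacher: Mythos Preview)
Your proof is correct and follows exactly the approach the paper indicates (``The same as the proof of Lemma~\ref{lem:n=1}''): write the generic lower triangular $X$, compute $\det(\lambda I - XG)$, and match coefficients with $(\lambda-a)(\lambda+b)(\lambda-c)$. The coefficient comparisons you perform are accurate, and the simplification $pq = b(a-2b+c) - 2ac + ab + bc = 2(a-b)(b-c)$ is correct.
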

\begin{proof}
The same as the proof of Lemma \ref{lem:n=1}.
\end{proof}

In particular, for any given triplet of numbers $(\la_0,\la_1,\la_2) \in \C^3$, there exist uncountably many triangular matrices $X$ satisfying $(x_{00},x_{11},x_{22})=(\la_0,\la_1,\la_2)$ and having weak anti-diagonal eigenvalue property. 

On the other hand, for any triplet of numbers $(\la_0,\la_1,\la_2) \in \C^3$ satisfying $\la_0 \neq \la_1$, there exists a unique triangular matrix $X$ whose all leading principle submatrices having weak anti-diagonal eigenvalue property. Precisely, 
\begin{equation}
XG = \left(
\begin{array}{ccc}
 0 & 0 & \la_0   \\
 0 & \la_1 & \la_0-\la_1 \\
 \la_2 & 2(\la_1-\la_2) & \la_0-2\la_1+\la_2
\end{array}
\right).
\end{equation}

From the above observations, it seems too complicated to characterize all matrices having (weak) anti-diagonal eigenvalue property. Therefore, we introduce the notion of global (weak) anti-diagonal eigenvalue property and characterize matrices having this property. 

Let $\N_0=\N \cup \{0\}$ and $M(\infty, \C)=\{ X=(x_{ij})_{i,j \in \N_0} \}$ be the set of matrices with infinitely many rows and columns and $M_L(\infty, \C)=\{ X=(x_{ij})_{i,j \in \N_0}; x_{ij}=0 \ (\forall \ i  < j) \}$ the set of lower triangular matrices with infinitely many rows and columns.

\begin{definition}
We say that $X \in M_L(n+1,\C)$ or $ \in M_L(\infty, \C)$ has \lq\lq global (weak) anti-diagonal eigenvalue property" if all finite leading principal submatrices of $X$ have (weak) anti-diagonal eigenvalue property.
\end{definition}

We denote the set of matrices having global anti-diagonal eigenvalue property and global weak anti-diagonal eigenvalue property by $M^*_L(n+1,\C)$, $M^*_L(\infty,\C)$, $M^{*,weak}_L(n+1,\C)$ and $M^{*,weak}_L(\infty,\C)$ respectively.

In the rest of the paper, if a matrix $X$ is defined by its entries $(x_{ij})$ where $x_{ij}$ depends only on $i$ and $j$, then we regard $X$ as an element of $M(n+1,\C)$ as well as of $M(\infty,\C)$ depending on the context. 

\begin{remark}
Both of $T$ and $S$ in Example \ref{exm:TS} are in $M^*_L(n+1,\C)$ and also in $M^*_L(\infty,\C)$.
\end{remark}

Our goal is to give a good characterization of $M^*_L(n+1,\C)$ (or $M^*_L(\infty,\C)$) and $M^{*,weak}_L(n+1,\C)$ (or $M^{*,weak}_L(\infty,\C)$).

Let $P=(p_{ij})$ be the matrix with entries $p_{ij}=\binom{i}{j} \1_{\{i \ge j \}}$. It is easy to see that $(P^{-1})_{ij}=(-1)^{i-j} \binom{i}{j} \1_{\{i \ge j \}}$. We define the linear subspaces $V_P(n)$ of $M(n+1, \C)$ and $V_P(\infty)$ of $M(\infty, \C)$ by 
\begin{align*}
V_P(n) & =\{X \in M(n+1,\C); P^{-1}XP \ \text{is a diagonal matrix} \}, \textit{and} \\
V_P(\infty) & =\{X \in M(\infty,\C); \quad \forall n \in \N, \quad  X|_n:=(x_{ij})_{i,j=0}^{n} \in V_P(n) \},
\end{align*}
respectively.


\begin{lemma}\label{lem:Q}
For $G \in M(n+1,\C)$, $Q:=P^{-1}GP$ is an upper triangular matrix with $Q_{ij}=(-1)^i \binom{n-i}{j-i}\1_{\{j \ge i \}}$. In particular, $Q_{ii}=(-1)^i$.
\end{lemma}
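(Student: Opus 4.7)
The plan is to compute $Q = P^{-1}GP$ entry-by-entry, reducing to a single binomial sum which I will evaluate using generating functions.

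First I would compute $GP$. Since $G_{ik} = \1_{\{i+k=n\}}$, left multiplication by $G$ reverses the order of rows, so $(GP)_{kj} = P_{n-k,j} = \binom{n-k}{j}\1_{\{n-k \ge j\}}$. With the convention $\binom{m}{j}=0$ for $0 \le m < j$, the indicator is automatic.

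Next I would expand
\begin{equation*}
Q_{ij} \;=\; \sum_{k=0}^{i}(P^{-1})_{ik}(GP)_{kj} \;=\; \sum_{k=0}^{i}(-1)^{i-k}\binom{i}{k}\binom{n-k}{j}.
\end{equation*}
To evaluate this, I would use the generating function representation $\binom{n-k}{j} = [x^j](1+x)^{n-k}$ and interchange sum and coefficient extraction:
\begin{equation*}
\sum_{k=0}^{i}(-1)^{i-k}\binom{i}{k}(1+x)^{n-k} \;=\; (1+x)^{n-i}\sum_{k=0}^{i}\binom{i}{k}(-1)^{i-k}(1+x)^{i-k} \;=\; (1+x)^{n-i}\bigl((1+x)-1\bigr)^i \;=\; (-1)^i x^i(1+x)^{n-i},
\end{equation*}
where in the last step I applied the binomial theorem after substituting $m = i-k$ (and absorbed the sign $(-1)^i$). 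Reading off the coefficient of $x^j$ on both sides yields $Q_{ij} = (-1)^i \binom{n-i}{j-i}$, with the convention that this vanishes whenever $j < i$; this simultaneously gives the claimed formula and the upper-triangularity. Setting $j=i$ gives $Q_{ii} = (-1)^i \binom{n-i}{0} = (-1)^i$.

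There is no serious obstacle here; the only subtle point is keeping the indicator conventions consistent, which is handled automatically by the vanishing of $\binom{n-k}{j}$ outside the valid range. As a sanity check, since $G^2 = I$ one has $Q^2 = P^{-1}G^2 P = I$, which is consistent with the diagonal entries being $\pm 1$.
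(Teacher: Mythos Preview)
Your computation is correct; the only blemish is a sign slip in the displayed chain, where $\sum_{k=0}^{i}\binom{i}{k}(-1)^{i-k}(1+x)^{i-k}$ equals $(1-(1+x))^i=(-x)^i$ rather than $\bigl((1+x)-1\bigr)^i$. Your final expression $(-1)^i x^i(1+x)^{n-i}$ is nonetheless right, and your parenthetical remark shows you were tracking the sign.

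The paper's argument is organised differently: it interprets $P$ and $G$ as the matrices, in the monomial basis $e_i(x)=x^i$, of the operators $(T_P f)(x)=f(x+1)$ and $(T_G f)(x)=x^n f(1/x)$ on polynomials of degree at most $n$, so that $Q=P^{-1}GP$ represents $T_P T_G T_{P^{-1}}$. A one-line operator computation then gives $(T_Q e_i)(x)=(-x)^i(x+1)^{n-i}$, and the entries of $Q$ are read off as its coefficients. Your generating-function manipulation lands on exactly the same polynomial, so the two proofs meet at the key identity; yours is a self-contained coefficient calculation, while the operator viewpoint has the advantage of being reused later in the paper (for instance, in diagonalising the matrices $A_\mu$).
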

\begin{proof}
We consider the correspondence between elements of $M(n+1,\C)$ and operators on polynomials of degree $n$. Define operators $T_P$ and $T_G$ on polynomials of degree $n$ as
\begin{align*}
(T_Pf)(x)=f(x+1), \quad (T_Gf)(x)=x^nf(\frac{1}{x}). 
\end{align*}
Then, the matrix representation with a basis $\{ e_i(x)=x^i;  0 \le i \le n \}$ of $T_P$ and $T_G$ are $P$ and $G$ respectively. Let $T_Q$ be the operator having the matrix representation $Q$. Then, 
\begin{align*}
(T_Qe_i)(x) & =(T_{P}T_GT_{P^{-1}}e_i) (x)  = (\frac{1}{x+1}-1)^i(x+1)^n \\
& =(-x)^i(x+1)^{n-i} = \sum_{j=0}^n (-1)^i \binom{n-i}{j-i}\1_{\{j \ge i \}} e_j(x) .
\end{align*}
Therefore, $Q_{ij}=(-1)^i \binom{n-i}{j-i}\1_{\{j \ge i \}}$.
\end{proof}

\begin{proposition}\label{prop:inj}
The following hold:
\begin{equation}
V_P(n) \subset M^{*,weak}_L(n+1, \C), \quad  V_P(\infty) \subset M^{*,weak}_L(\infty, \C).
\end{equation}
\end{proposition}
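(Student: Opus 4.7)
The plan is to leverage the diagonalization identity $X = PDP^{-1}$ available for $X \in V_P(n)$, where $D := P^{-1}XP$ is diagonal, together with Lemma \ref{lem:Q}, which states that $Q := P^{-1}GP$ is upper triangular with $Q_{ii} = (-1)^i$. Note first that since $P$ and $P^{-1}$ are lower triangular with unit diagonal, $X = PDP^{-1}$ is lower triangular and $x_{ii} = D_{ii}$, so $V_P(n) \subset M_L(n+1,\C)$ automatically.

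First I would compute the characteristic polynomial of $XG$. Using $P^{-1}G = QP^{-1}$ (the defining relation of $Q$), we get
\[
XG \;=\; PDP^{-1}G \;=\; P(DQ)P^{-1},
\]
so $XG$ is similar to $DQ$. Since $D$ is diagonal and $Q$ is upper triangular with $Q_{ii} = (-1)^i$, the product $DQ$ is upper triangular with diagonal entries $(-1)^i D_{ii} = (-1)^i x_{ii}$. Hence
\[
\det(\lambda I - XG) \;=\; \det(\lambda I - DQ) \;=\; \prod_{i=0}^{n}\bigl(\lambda - (-1)^i x_{ii}\bigr),
\]
which is exactly the weak anti-diagonal eigenvalue property for $X$.

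Second, I would upgrade this to the \emph{global} weak property, i.e.\ verify that every leading principal submatrix $X|_k$ ($0 \le k \le n$) also satisfies it. The key observation is that because $P$, $D$, and $P^{-1}$ are all lower triangular, restriction to the top-left $(k+1)\times(k+1)$ block commutes with their product; combined with the identity $(P|_k)^{-1} = (P^{-1})|_k$ (valid for lower triangular matrices with unit diagonal), this gives
\[
X|_k \;=\; P|_k \, D|_k \, (P|_k)^{-1},
\]
so $X|_k \in V_P(k)$. Applying the first step to $X|_k$ (with $G$ replaced by the corresponding anti-identity of size $k+1$) yields the weak anti-diagonal eigenvalue property for each submatrix, hence the global version. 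The infinite statement $V_P(\infty) \subset M^{*,weak}_L(\infty,\C)$ is then immediate, since by definition both sides are characterized entirely by their finite leading principal submatrices.

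The main obstacle, more a bookkeeping point than a real difficulty, is the verification that taking leading principal blocks commutes with multiplication of lower triangular matrices, and that inversion behaves well under this restriction; without this, the global property would not reduce to the finite-size computation. Once that is in hand, the entire content of the proposition is the single similarity $XG \sim DQ$ together with Lemma \ref{lem:Q}.
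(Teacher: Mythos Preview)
Your proof is correct and follows essentially the same approach as the paper: both reduce to the observation that $P^{-1}XGP = (P^{-1}XP)(P^{-1}GP) = DQ$ is upper triangular with diagonal entries $(-1)^i x_{ii}$, invoking Lemma~\ref{lem:Q}. Your treatment is in fact more careful than the paper's, which states only that $X$ has the weak anti-diagonal eigenvalue property and leaves the passage to the \emph{global} property (i.e.\ the verification that $X|_k \in V_P(k)$ via the compatibility of leading principal blocks with products and inverses of lower triangular matrices) implicit.
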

\begin{proof}
By Lemma \ref{lem:Q}, if $X \in V_P(n)$ (or $V_P(\infty)$), then $P^{-1}XGP$ is an upper triangular matrix with diagonal elements $(-1)^i x_{ii}.$ That is, $X$ has weak anti-diagonal eigenvalue property.
\end{proof}

Define the maps $\Pi: \C^{n+1} \to M_L^{*,weak}(n+1,\C)$ and $\Phi: M_L^{*,weak}(n+1,\C) \to \C^{n+1}$ by 
\begin{displaymath}
\Pi((\la_0,\la_1,\dots, \la_n)) =PDP^{-1}, \textit{where} \quad D=diag(\la_0,\la_1,\dots, \la_n), \textit{and}
\end{displaymath}
\begin{displaymath}
\Phi(X) =(x_{00},x_{11},\dots, x_{nn}), \textit{where}  \quad  X=(x_{ij})_{0 \le i,j \le n}.
\end{displaymath}
We define $\Pi: \C^{\N_0} \to M^{*,weak}_L(\infty, \C)$ and $\Phi: M^{*,weak}_L(\infty, \C) \to \C^{\N_0}$ in the same manner. 

Obviously, $\Pi$ is injective. For $\Phi$, we have a simple corollary of Proposition \ref{prop:inj}:
\begin{corollary}
$\Phi$ is surjective.
\end{corollary}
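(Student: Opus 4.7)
The plan is to show that $\Pi$ is a right inverse of $\Phi$; surjectivity of $\Phi$ then follows at once.

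Given $\vec{\lambda}=(\lambda_0,\ldots,\lambda_n)\in\C^{n+1}$, I set $X:=\Pi(\vec{\lambda})=PDP^{-1}$ with $D=\mathrm{diag}(\lambda_0,\ldots,\lambda_n)$, and I would first check that $X$ is lower triangular and that $\Phi(X)=\vec{\lambda}$. Both are easy: since $p_{ii}=\binom{i}{i}=1$, the matrix $P$ is lower triangular with unit diagonal, so $P^{-1}$ is also lower triangular with unit diagonal, and therefore $PDP^{-1}$ is lower triangular and its diagonal agrees with that of $D$, namely $\vec{\lambda}$.

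The second (and essentially only substantive) step is to verify $X\in M_L^{*,weak}(n+1,\C)$, i.e.\ that every leading principal submatrix $X|_k$ has weak anti-diagonal eigenvalue property. The key observation is that for a lower triangular matrix $P$, taking the top-left $(k+1)\times (k+1)$ block is compatible with products and inverses; more precisely, $(P^{-1})|_k=(P|_k)^{-1}$, and hence $X|_k=(PDP^{-1})|_k=P|_k\,D|_k\,(P|_k)^{-1}$. Consequently $X|_k\in V_P(k)$, and Proposition~\ref{prop:inj} applied at size $k+1$ yields that $X|_k$ has weak anti-diagonal eigenvalue property. As this holds for every $k\le n$, one concludes $X\in M_L^{*,weak}(n+1,\C)$, so $\Pi$ is well-defined into the claimed codomain.

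Combining the two steps gives $\Phi\circ\Pi=\mathrm{id}_{\C^{n+1}}$, whence $\Phi$ is surjective. The only subtlety is the principal-block compatibility identity, which is nonetheless entirely routine for lower triangular matrices; I do not expect any serious obstacle, and the infinite-dimensional version follows by the same argument applied levelwise.
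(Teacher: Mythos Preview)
Your proposal is correct and follows essentially the same approach as the paper: both argue that $\Phi\circ\Pi=\mathrm{Id}$, using that $P$ is lower triangular with unit diagonal so that $PDP^{-1}$ has diagonal $\vec{\lambda}$. Your write-up is simply more explicit about why $\Pi$ lands in $M_L^{*,weak}(n+1,\C)$ (spelling out the principal-block compatibility for lower triangular matrices), a point the paper absorbs into Proposition~\ref{prop:inj} and the definition of $\Pi$.
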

\begin{proof}
It is enough to see that $\Phi \circ \Pi =\mathrm{Id}$ and it follows by the fact that $P$ is triangular. 
\end{proof}

\begin{remark}
For $X \in V_P(n)$ or $V_P(\infty)$, $\Pi \circ \Phi (X)=X$ also holds.
\end{remark}

Next, we study the injectivity of $\Phi$. It is equivalent to study whether $M^{*,weak}_L(n+1, \C) = V_P(n)$ or not. As we saw in Lemmas \ref{lem:n=1} and \ref{lem:n=2}, if $n=1$, then $\Phi$ is injective, but if $n=2$, it is not. On the other hand, if we consider $E_2:=\{ (\la_0,\la_1,\la_2) \in \C^3; \la_0 \neq \la_1 \}$, then $\Phi : \Phi^{-1}(E_2) \to E_2$ is injective. Namely, we may be able to find a sequence of polynomials $H_k(z)=H_k(z_0,\ldots,z_{k-1}) \in \C[z_0,\ldots,z_{k-1}]$ such that if $H_k(\la_0,\la_1, \dots, \la_{k-1}) \neq 0$ for all $1 \le k \le n$, then there exists a unique matrix $X \in M_L^{*,weak}(n+1,\C)$ satisfying $\Phi(X)=(\la_0,\la_1,\dots, \la_n)$. For example, we can take $H_1(z) \equiv 1$ and $H_2(z)=H_2(z_0,z_1) =z_0-z_1$. We are going to show that it is the case.

First, we study a sufficient condition for a given sequence $(z_0,z_1,\dots,z_n)$, $X \in M^{*,weak}(n+1,\C)$ satisfying $\Phi(X)=(z_0,z_1,\dots,z_n)$ and $X|_{n-1}=P|_{n-1} D|_{n-1} P^{-1}|_{n-1}$ with $D=\textit{diag}(z_0,z_1,\dots,z_n)$ is unique. In other words, we consider a sufficient condition of $(z_0,z_1,\dots,z_n)$ for the following holds: \lq\lq If $X \in M_L^{*,weak}(n+1,\C)$ satisfies $\Phi(X)=(z_0,z_1,\dots,z_n)$ and $X=PDP^{-1}+W$ with 
\begin{equation}
W = \left(
\begin{array}{ccc}
&   &  \\
& \hsymb{0}  &\\
w_0 & w_1 \cdots  w_{n-1} & 0 \\
\end{array}
\right),
\end{equation}
then $W=0$". 

If $X \in M_L^{*,weak}(n+1,\C)$ and $\Phi(X)=(z_0,z_1,\dots,z_n)$, then 
\begin{align*}
\mathrm{det} (\lambda I - XG)= \prod_{i=0}^n (\la-(-1)^iz_i)=\mathrm{det} (\lambda I - PDP^{-1}G)
\end{align*}
holds. Therefore, for $X=PDP^{-1}+W$, we have 
\begin{align*}
\mathrm{det} (\lambda I - (PDP^{-1}+W)G)= \mathrm{det} (\lambda I - PDP^{-1}G)
\end{align*}
and it is equivalent to 
\begin{align*}
\mathrm{det} (\lambda I - P^{-1}(PDP^{-1}+W)GP) & = \mathrm{det} (\lambda I - DP^{-1}GP -P^{-1}WGP) \\ 
 & = \mathrm{det} (\lambda I - DP^{-1}GP).
\end{align*}
Let
\begin{equation*}
U:=P^{-1}WGP = \left(
\begin{array}{ccc}
&   &  \\
& \hsymb{0}  &\\
u_0 & u_1 \cdots  u_{n-1} & u_n \\
\end{array}
\right)
\end{equation*}
where $u_i=\sum_{k=0}^{n-1}w_k p_{n-k,i}$. In particular, $W=0$ if and only if $U=0$. Let $C_{j}$ be the $(n,j)$ cofactor of $\lambda I - DP^{-1}GP$. Then, by expanding determinants $\lambda I - DP^{-1}GP -U$ and $\lambda I - DP^{-1}GP$ on the $n$-th row, we have
\begin{align*}
-\sum_{j=0}^{n}u_{j} C_{j}+(\la-(-1)^{n}z_n)C_{n}=(\la-(-1)^{n}z_n)C_{n}
\end{align*}
which is equivalent to 
\begin{align}\label{eq:linear relation}
\sum_{j=0}^{n}u_{j} C_{j}=0.
\end{align}
Since $\lambda I - DP^{-1}GP=\la I -DQ$ where $Q_{ij}=(-1)^i \binom{n-i}{j-i}\1_{\{j \ge i \}}$ by Lemma \ref{lem:Q}, 
$$(\la I -DQ)_{ij}=\la\delta_{ij} - z_i(-1)^i \binom{n-i}{j-i}\1_{\{j \ge i \}},$$
that is, 
{\small
\begin{equation*}
\begin{array}{rl}
&\lambda I - DP^{-1}GP \\
=& \left(
\begin{array}{cccccccc}
\la - z_0 & -n z_0  &  \cdots & \cdots & - \binom{n}{j} z_0  & \cdots &  -z_0\\
0 &  \la+z_1 & (n-1) z_1 &  \cdots & \binom{n-1}{j-1}z_1 & \cdots & z_1\\
0 & 0 & \la-z_2 & \cdots &  \cdots & \cdots  & -z_2 \\
 && & \ddots &&& \vdots \\
 && &  &\la -(-1)^j z_{j}&\cdots& (-1)^jz_j\\
 && &  &&\ddots& \vdots \\
\hsymb{0} &&&&&&  \la -(-1)^n z_{n}\\
\end{array}
\right)
\end{array}
\end{equation*}
Therefore, the coefficient of $\la^n$ in $\sum_{j=0}^{n}u_{j} C_{j}$ is $u_n$, and so if $\sum_{j=0}^{n}u_{j} C_{j}=0$, then $u_n=0$. On the other hand, for $0 \le j \le n-1$, $C_j=\sum_{k=0}^{n-1}\la^{k} f_{k,j}(z_0,z_1,\dots,z_{n-1})$ where $f_{k,j}$ is a homogeneous polynomial of degree $n-k$ which is linear in each $z_0$, $z_1$, \dots, $z_{n-1}$. Denote the determinant of the $n \times n$ matrix having $f_{k,j}(z_0,z_1,\dots,z_{n-1})$ as the entry in row $k$ and column $l$ by $H_n(z)$, namely
\begin{equation*}
H_{n}(z)\equiv H_{n}(z_0,z_1,\dots,z_{n-1})=\det (f_{k,j}(z_0,z_1,\dots,z_{n-1}))_{k,j=0,1,\dots,n-1}.
\end{equation*} Obviously, $H_{n}(z)$ is a polynomial of $z_0$, $z_1$, \dots, $z_{n-1}$. By the construction, if $H_{n}(z) \neq 0$, then the linear system (\ref{eq:linear relation}) admits only the trivial solution $(u_0, \dots, u_n)=(0, \dots, 0)$.

\begin{lemma}\label{lem:nonzero}
For any $n \in \N_0$, $H_n(z) \not\equiv 0$.
\end{lemma}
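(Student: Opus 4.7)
The plan is to derive an explicit factorization of each cofactor $C_j$ and then to compute $H_n$ using a triangular structure in a well-chosen basis of $\C[\la]_{\le n-1}$, reducing the claim to the nonvanishing of several individual polynomial factors.

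First, I would exploit the adjugate identity for $\la I - DP^{-1}GP$: writing $\mathbf{C}(\la) := (C_0(\la), \ldots, C_n(\la))$ as a column vector, this identity reads $(\la I - DP^{-1}GP)\mathbf{C}(\la) = \det(\la I - DP^{-1}GP) e_n$, with $e_n$ the last standard basis vector. Reading off the $i$-th row for $i < n$ yields
\[
(\la - (-1)^i z_i)\, C_i(\la) \;=\; (-1)^i z_i \sum_{k=i+1}^{n} \binom{n-i}{k-i} C_k(\la),
\]
and starting from $C_n(\la) = \prod_{j=0}^{n-1}(\la - (-1)^j z_j)$, reverse induction on $i$ should yield
\[
C_i(\la) \;=\; (-1)^i z_i \, \phi_i(\la)\, R_{n-i}(\la),
\]
where $\phi_i(\la) := \prod_{j=0}^{i-1}(\la - (-1)^j z_j)$ and $R_{n-i}(\la)$ is a monic polynomial in $\la$ of degree $n-i-1$ depending only on $z_{i+1}, \ldots, z_{n-1}$. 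Carrying out this induction --- in particular checking that at each step the right-hand side is divisible by $\la - (-1)^i z_i$ --- is the main technical obstacle; the exactness of the division follows because, after substituting the inductive formulas for the $C_k$ with $k > i$, the common factor $\phi_{i+1} = \phi_i \cdot (\la - (-1)^i z_i)$ can be pulled out of the bracket.

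Next, $\{\phi_0, \phi_1, \ldots, \phi_{n-1}\}$ is a basis of $\C[\la]_{\le n-1}$ related to $\{1, \la, \ldots, \la^{n-1}\}$ by a triangular change of basis with unit diagonal, so this change preserves the determinant. Since $\hat C_j := C_j/z_j = (-1)^j \phi_j R_{n-j}$ lies in $\mathrm{span}\{\phi_j, \phi_{j+1}, \ldots, \phi_{n-1}\}$, the coordinate matrix of $\hat C_0, \ldots, \hat C_{n-1}$ in the $\phi$-basis is triangular, with $j$-th diagonal entry $(-1)^j R_{n-j}((-1)^j z_j)$ (namely, the constant term of $R_{n-j}$ when expanded in the Newton basis with nodes $(-1)^j z_j, (-1)^{j+1} z_{j+1}, \ldots, (-1)^{n-1} z_{n-1}$). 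This gives
\[
H_n(z) \;=\; z_0 z_1 \cdots z_{n-1} \cdot (-1)^{\binom{n}{2}} \prod_{j=0}^{n-1} R_{n-j}\bigl((-1)^j z_j\bigr).
\]

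Finally, I would show that each factor $R_{n-j}((-1)^j z_j)$ is not identically zero. Specializing $z_{j+1} = \cdots = z_{n-1} = 0$ kills every summation term in the recursion defining $R_{n-j}$ and leaves $R_{n-j}(\la) = \la^{n-j-1}$, so the factor becomes $((-1)^j z_j)^{n-j-1}$, which is nonzero for $z_j \neq 0$. Since $\C[z_0, \ldots, z_{n-1}]$ is an integral domain, the product of these factors does not vanish identically, whence $H_n \not\equiv 0$.
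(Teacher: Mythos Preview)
Your argument is correct and yields more than the paper's proof: you obtain an explicit factorization
\[
H_n(z)=z_0z_1\cdots z_{n-1}\,(-1)^{\binom{n}{2}}\prod_{j=0}^{n-1}R_{n-j}\bigl((-1)^j z_j\bigr),
\]
from which $H_n\not\equiv 0$ follows because each factor is nonzero after the specialization $z_{j+1}=\cdots=z_{n-1}=0$. The paper takes a shorter combinatorial route: it observes that $H_n$ is homogeneous of degree $\tfrac{n(n+1)}{2}$ with each $f_{k,j}$ multilinear of degree $n-k$, and then computes the single monomial $z_0^n z_1^{n-1}\cdots z_{n-1}$. Since this monomial has a \emph{unique} factorization into multilinear monomials of degrees $1,2,\ldots,n$, namely $z_0\cdot(z_0z_1)\cdots(z_0\cdots z_{n-1})$, only one permutation contributes and the coefficient is seen to be $\pm 1$. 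The paper's proof is quicker and avoids the cofactor recursion entirely; your approach is longer but structurally richer, as the factorization pinpoints the vanishing locus of $H_n$ (it vanishes on each hyperplane $z_j=0$ and on the hypersurfaces $R_{n-j}((-1)^j z_j)=0$), information the paper's argument does not provide.
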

\begin{proof}
Since $f_{k,j}(z_0,z_1,\dots,z_{n-1})$ are homogeneous polynomials of degree $n-k$ which are linear in each $z_0$, $z_1$, \dots, $z_{n-1}$, $H_n(z)$ is a homogenous polynomial of degree $\frac{n(n+1)}{2}$. We will show that the coefficient of the term $z_0^nz_1^{n-1} \cdots z_{n-1}$ is not $0$. Note that we have the following unique decomposition of $z_0^nz_1^{n-1} \cdots z_{n-1}$ into the product of $n$ monomials of degree $1$, $2$, \dots, $n$ where each of them is linear in $z_0$, $z_1$, \dots, $z_{n-1}$:
$$z_0^nz_1^{n-1} \cdots z_{n-1}=z_0(z_0z_1)(z_0z_1z_2)\cdots(z_0z_1z_2\cdots z_{n-1}).$$ Then the coefficient of the term $z_0^nz_1^{n-1} \cdots z_{n-1}$ of $H_n(z)$ is 
\begin{equation}\label{eq:coefficientofHnz}
\sum_{\sigma} \prod_{k=0}^{n-1} f^{0,1,\dots, n-k-1}_{k, \sigma(k)},
\end{equation} 
where $f^{0,1,\dots, n-k-1}_{k, j}$ is the coefficient of $z_0z_1\cdots z_{n-k-1}$ of $f_{k,j}(z)$. Now, it is easy to see that $f^0_{n-1,j}=\pm \delta_{0j}$ where $\pm$ depends on $n$. Also, for $j \ge 1$, $f^{0,1}_{n-2,j}=\pm \delta_{1j}$ and for $j \ge 2$, $f^{0,1,2}_{n-3,j}=\pm \delta_{2j}$ and so on. Thus, (\ref{eq:coefficientofHnz}) is equal to $\prod_{k=0}^{n-1} f^{0,1,\dots, n-k-1}_{k, n-k-1}=\pm1 \neq 0$.

\end{proof}

Now, we have a sufficient condition to characterize $M_L^{*,weak}(n+1,\C)$ and $M_L^{*,weak}(\infty,\C)$. Let $E_{n}:=\{ (z_0,z_1,\dots,z_n) \in \C^{n+1}; H_{k} (z_0,z_1, \dots, z_{k-1}) \neq 0, \ 1 \le \forall k \le n \}$ and $E:=\{ (z_i)_{i \in \N_0} :  H_{k} (z_0,z_1, \dots, z_{k-1}) \neq 0, \ \forall k \in \N \}$.

\begin{remark}
By Lemma \ref{lem:nonzero}, $E_n \neq \emptyset$. Moreover, $E \neq \emptyset$. In fact, since $E=\bigcap_{n=1}^{\infty}E_n$, for any probability measure $\mu$ on $\C$ with positive density function, $\mu^{\otimes \N_0}(E)=\lim_{n \to \infty} \mu^{\otimes \N_0}(E_n)=1$. 
\end{remark}

\begin{proposition}\label{prop:inj2}
$\Phi:\Phi^{-1}(E_n) \to E_n$ is injective.
\end{proposition}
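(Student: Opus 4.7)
The plan is to prove the proposition by induction on $n$, using the computation already carried out in the paragraphs preceding Lemma \ref{lem:nonzero} as essentially the inductive step. The base case $n=0$ is vacuous and $n=1$ is covered by Lemma \ref{lem:n=1}. Suppose the result holds for $n-1$ and take two matrices $X,Y\in\Phi^{-1}(E_n)$ with $\Phi(X)=\Phi(Y)=(z_0,\ldots,z_n)$. By definition of $M_L^{*,weak}(n+1,\C)$, the leading principal submatrices $X|_{n-1}$ and $Y|_{n-1}$ lie in $M_L^{*,weak}(n,\C)$ and share the same diagonal $(z_0,\ldots,z_{n-1})$. Since $(z_0,\ldots,z_n)\in E_n$ forces $H_k(z_0,\ldots,z_{k-1})\neq 0$ for $1\le k\le n-1$, the truncation lies in $E_{n-1}$, and the induction hypothesis gives $X|_{n-1}=Y|_{n-1}$. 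By Proposition \ref{prop:inj} this common submatrix must equal $\Pi((z_0,\ldots,z_{n-1}))=P|_{n-1}\,\textit{diag}(z_0,\ldots,z_{n-1})\,P^{-1}|_{n-1}$.

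Next I would note that because $P$ and $P^{-1}$ are lower triangular, a direct block computation shows $(PDP^{-1})|_{n-1}=P|_{n-1}D|_{n-1}P^{-1}|_{n-1}$ where $D=\textit{diag}(z_0,\ldots,z_n)$. Combining with the previous step, both $X$ and $Y$ agree with $PDP^{-1}$ on every entry outside the last row, and their $(n,n)$-entries both equal $z_n$. Hence each has the form $PDP^{-1}+W$, where $W$ is supported on the positions $(n,0),(n,1),\ldots,(n,n-1)$. The proposition will follow once we show that any such $W$ must vanish when $(z_0,\ldots,z_n)\in E_n$.

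This last step is precisely the content of the discussion preceding Lemma \ref{lem:nonzero}. From $X\in M_L^{*,weak}(n+1,\C)$ and the fact that $PDP^{-1}\in V_P(n)\subset M_L^{*,weak}(n+1,\C)$ realizes the same characteristic polynomial $\prod_{i=0}^n(\lambda-(-1)^i z_i)$, one obtains the identity \eqref{eq:linear relation}, namely $\sum_{j=0}^{n}u_j C_j=0$ with $U=P^{-1}WGP$. Extracting the coefficient of $\lambda^n$ gives $u_n=0$, and extracting the coefficients of $\lambda^0,\ldots,\lambda^{n-1}$ produces a homogeneous $n\times n$ linear system in $(u_0,\ldots,u_{n-1})$ whose coefficient matrix has determinant $H_n(z_0,\ldots,z_{n-1})$. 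Because $(z_0,\ldots,z_n)\in E_n$, this determinant is nonzero, so $u_0=\cdots=u_{n-1}=0$, hence $U=0$, and therefore $W=0$ since $P$ and $G$ are invertible. Thus $X=Y=PDP^{-1}$, completing the induction. There is no genuine obstacle: all the nontrivial algebra (the translation to $U$, the cofactor expansion, and the nonvanishing of $H_n$) has already been set up, and the proof is a matter of packaging these ingredients as an induction on $n$.
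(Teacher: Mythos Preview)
Your proof is correct and follows essentially the same approach as the paper: induction on $n$, reducing to the identification $X|_{n-1}=PDP^{-1}|_{n-1}$ via the induction hypothesis, and then invoking the linear system \eqref{eq:linear relation} together with $H_n(z_0,\ldots,z_{n-1})\neq 0$ to force the last row to be uniquely determined. The only cosmetic difference is that the paper phrases the induction as ``any $X$ with $\Phi(X)=(\lambda_0,\ldots,\lambda_{k+1})$ must equal $PDP^{-1}$'' (uniqueness), whereas you introduce two elements $X,Y$ and show $X=Y$; your intermediate appeal to Proposition~\ref{prop:inj} (to place $\Pi((z_0,\ldots,z_{n-1}))$ in $\Phi^{-1}(E_{n-1})$ and then apply injectivity) amounts to the same thing.
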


\begin{proof}
We do this by the mathematical induction on $n$. The claim is true for $n=1$.

Next, assume that the claim holds for $n=k$. Fix $(\la_0,\la_1, \dots, \la_{k+1}) \in E_{k+1}$ and suppose that $X \in M_L^{*,weak}(k+2,\C)$ satisfies $\Phi(X)=(\la_0,\la_1, \dots, \la_{k+1})$. By the definition, the matrix $X|_{k}$ satisfies $\Phi(X|_{k})=(\la_0,\la_1, \dots, \la_{k})$ and $X|_k \in M_L^{*,weak}(k+1,\C)$. Since $(\la_0, \la_1,\dots, \la_k) \in E_k$, by the inductive assumption, $X|_k= PDP^{-1}$ where $D=diag(\la_0,\la_1,\dots,\la_k)$. Also, by the assumption $x_{k+1,k+1}=\la_{k+1}$ and $x_{i,k+1}=0$ for $i \le k$. Therefore, we only need to show that $x_{k+1,j},  \ 0 \le j \le k$ are determined uniquely under the condition that $X \in M_L^{*,weak}(k+2,\C)$. This follows from the definition of the set $E_{k+1}$. 

Therefore, the claim holds for all $n \in \N_0$. 
\end{proof}

\begin{proposition}
$\Phi:\Phi^{-1}(E) \to E$ is injective.
\end{proposition}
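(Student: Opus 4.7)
The plan is to reduce the infinite-dimensional statement to the finite-dimensional one already established in Proposition \ref{prop:inj2}. The point is that both the notion of global weak anti-diagonal eigenvalue property and the map $\Phi$ are defined entry-by-entry through the leading principal submatrices, so injectivity should propagate from each finite truncation.

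First I would fix $(\la_i)_{i \in \N_0} \in E$ and suppose $X, Y \in \Phi^{-1}((\la_i)_{i \in \N_0}) \subset M^{*,weak}_L(\infty, \C)$. For each $n \in \N_0$, restrict to the leading principal submatrix: by the very definition of $M^{*,weak}_L(\infty, \C)$, both $X|_n$ and $Y|_n$ belong to $M^{*,weak}_L(n+1, \C)$, and by the definition of $\Phi$ one has $\Phi(X|_n) = \Phi(Y|_n) = (\la_0, \la_1, \dots, \la_n)$.

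Next I would observe that the truncated diagonal $(\la_0, \dots, \la_n)$ lies in $E_n$. This is immediate: the polynomials $H_k$ in the definition of $E$ depend only on $z_0, \dots, z_{k-1}$, so if $H_k(\la_0, \dots, \la_{k-1}) \neq 0$ for every $k \in \N$, then in particular $H_k(\la_0, \dots, \la_{k-1}) \neq 0$ for $1 \le k \le n$, which is exactly the membership condition for $E_n$. Applying Proposition \ref{prop:inj2} then yields $X|_n = Y|_n$.

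Finally, since $n$ was arbitrary and every entry $x_{ij}$ (resp.\ $y_{ij}$) appears in $X|_n$ (resp.\ $Y|_n$) for all sufficiently large $n$, we conclude $X = Y$. I do not expect any real obstacle here; the proof is a clean bookkeeping argument built on top of the finite case, and the only thing to verify carefully is that the restriction operation commutes with both the weak anti-diagonal eigenvalue property (which it does by definition) and with $\Phi$ (which it does by the fact that $\Phi$ only reads diagonal entries).
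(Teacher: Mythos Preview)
Your proof is correct and matches the paper's approach: the paper's proof is the single sentence ``It is shown in the same way as in the proof of Proposition \ref{prop:inj2},'' and your reduction to the finite case via truncation is exactly how one makes that sentence precise. The only point worth noting is that your verification that $X|_n \in M^{*,weak}_L(n+1,\C)$ (i.e., that the \emph{global} weak property passes to truncations) is immediate because the leading principal submatrices of $X|_n$ are themselves leading principal submatrices of $X$.
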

\begin{proof}
It is shown in the same way as in the proof of Proposition \ref{prop:inj2}.
\end{proof}

To characterize $M_L^{*}(n+1,\C)$ and $M_L^{*}(\infty,\C)$, we define $\tilde{E}_n$ and $\tilde{E}$ by
\begin{align*}
\tilde{E}_n & : =E_n \cap \{ (\la_0,\la_1,\dots, \la_n)  \in \C^{n+1}; (-1)^i  \la_i \neq  (-1)^j \la_j \ (i \neq j) \} \\
\tilde{E} & : =E \cap \{ (\la_n)_{n \in \N_0}  \in \C^{\N_0} ; (-1)^i \la_i \neq  (-1)^j \la_j (i \neq j) \}.
\end{align*}
As a consequence of this section, we have the following theorem.
\begin{theorem}
$\Phi : \Phi^{-1}(\tilde{E}_n) \to \tilde{E}_n$ is one-to-one and $\Phi^{-1}(\tilde{E}_n) \subset M_L^{*}(n+1,\C)$. Also, $\Phi : \Phi^{-1}(\tilde{E}) \to \tilde{E}$ is one-to-one and $\Phi^{-1}(\tilde{E}) \subset M_L^{*}(\infty,\C)$.
\end{theorem}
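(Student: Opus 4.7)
The plan is to assemble the statement from pieces already in place: Proposition~\ref{prop:inj2} for injectivity on $\Phi^{-1}(E_n)$, Proposition~\ref{prop:inj} together with the section $\Pi$ for surjectivity, and the remark (right after the definition of weak anti-diagonal eigenvalue property) that distinct signed diagonal entries upgrade the weak property to the full one. No new computation should be required; the work is bookkeeping about how the conditions defining $\tilde E_n$ restrict to every leading principal submatrix.

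First, for the bijection in the finite case, I would observe that $\tilde E_n \subset E_n$ directly from the definition, so Proposition~\ref{prop:inj2} restricts to give injectivity of $\Phi$ on $\Phi^{-1}(\tilde E_n)$ at no cost. For surjectivity onto $\tilde E_n$, given any $\lambda = (\lambda_0,\dots,\lambda_n) \in \tilde E_n$, the matrix $\Pi(\lambda) = PDP^{-1}$ lies in $V_P(n) \subset M_L^{*,\mathrm{weak}}(n+1,\C)$ by Proposition~\ref{prop:inj}, and $\Phi(\Pi(\lambda)) = \lambda$ because $P$ is lower triangular with ones on the diagonal. Hence $\Pi(\lambda) \in \Phi^{-1}(\tilde E_n)$ and $\Phi$ restricted to $\Phi^{-1}(\tilde E_n)$ is a bijection onto $\tilde E_n$.

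Second, for the inclusion $\Phi^{-1}(\tilde E_n) \subset M_L^*(n+1,\C)$, I would take $X \in \Phi^{-1}(\tilde E_n)$ with $\Phi(X) = (\lambda_0,\dots,\lambda_n)$ and verify the anti-diagonal eigenvalue property for every leading principal submatrix $X|_k$, $0 \le k \le n$. By the global weak property of $X$, each $X|_k$ already has weak anti-diagonal eigenvalue property with diagonal entries $(\lambda_0,\dots,\lambda_k)$. The condition defining $\tilde E_n$ says $(-1)^i \lambda_i \neq (-1)^j \lambda_j$ for all distinct $i,j \in \{0,\dots,n\}$, and this condition is inherited by the smaller index set $\{0,\dots,k\}$. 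Invoking the remark that distinct signed diagonals promote the weak property to the full property then gives $X|_k \in M_L^*(k+1,\C)$ for every $k$, which is exactly the global anti-diagonal eigenvalue property for $X$.

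Finally, the infinite statements reduce to the finite ones by truncation: if $X \in \Phi^{-1}(\tilde E)$, then every finite truncation $X|_n$ lies in $\Phi^{-1}(\tilde E_n)$, and both claims apply uniformly in $n$. The only thing I would check with care is that the defining conditions on $\tilde E_n$ (and $\tilde E$) are genuinely hereditary along truncations, which is immediate from their set-theoretic description. I do not foresee any serious obstacle; the proof is essentially a packaging of the previous propositions and the signed-diagonal-distinctness remark.
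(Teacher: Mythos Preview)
Your proposal is correct and matches the paper's intent: the paper presents this theorem with the single line ``As a consequence of this section, we have the following theorem'' and gives no further proof, so you are simply filling in the bookkeeping that the authors consider immediate from Proposition~\ref{prop:inj2}, Proposition~\ref{prop:inj}, and the remark that distinct signed diagonal entries upgrade the weak property to the full one. One small redundancy: surjectivity of $\Phi$ onto $\tilde E_n$ is automatic once you observe that $\Phi^{-1}(\tilde E_n)$ is defined as a preimage and that $\Phi$ is already known to be surjective onto all of $\C^{n+1}$ (the Corollary following Proposition~\ref{prop:inj}), so your explicit construction via $\Pi$ is not needed, though it does no harm.
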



\subsection{Stochastic and symmetric case}

In this section, we characterize two interesting subclasses of the matrices having global (weak) anti-diagonal eigenvalue property. 

\begin{definition}
We say that $X=(x_{ij}) \in M_L(n+1,\C)$ or $M_L(\infty,\C)$ is \lq\lq stochastic" if $x_{ij} \ge 0$ for all $i,j$ and $\sum_{j=0}^{\infty} x_{ij}=(\sum_{j=0}^i x_{ij})=1$ for all $i$.
\end{definition}

\begin{definition}
We say that $X=(x_{ij}) \in M_L(n+1,\C)$ or $M_L(\infty,\C)$ is \lq\lq symmetric" if $x_{i,i-j}=x_{i,j}$ for all $i \ge j$.
\end{definition}

To make the notations simple, in this section, we only consider the infinite matrices, but similar results also hold for finite matrices.

We denote the set of stochastic lower triangular matrices by $M_{L,stoch}=M_{L,stoch}(\infty,\C)$ and the set of symmetric lower triangular matrices by $M_{L,sym}=M_{L,sym}(\infty,\C)$. We define $M^{*}_{L,stoch}=M^{*}_{L} \cap M_{L,stoch}$. $M^{*,weak}_{L,stoch}$, $M^{*}_{L,sym}$ and $M^{*,weak}_{L,sym}$ are defined by the same way. 
\begin{remark}
$T$ and $S$ in Example \ref{exm:TS} are in $M^{*}_{L,stoch} \cap M^{*}_{L,sym}$.
\end{remark}

Our goal is to characterize the sets $M^{*}_{L,stoch}$, $M^{*}_{L,sym}$ and $M^{*}_{L,stoch} \cap M^{*}_{L,sym}$. 

\subsubsection{Stochastic case}

We start from an important lemma.

\begin{lemma}\label{lem:Au}
Let $A(u)$ be the matrix with entries $A(u)_{ij}=\binom{i}{j}u^j(1-u)^{i-j}\1_{\{i \ge j \}}$. Then, $P^{-1}A(u)P= diag(1,u,u^2, \dots, u^n)$, and hence $A(u) \in V_P(n)$. In particular, the set $\{A(u_i); i =0,1,\dots,n\}$ forms a basis of $V_P(n)$ for any distinct $u_0,u_1,\dots, u_n$.
\end{lemma}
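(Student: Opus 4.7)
The plan is to follow the operator--polynomial dictionary used in the proof of Lemma~\ref{lem:Q}. I would first identify the operator $T_{A(u)}$ whose matrix with respect to the basis $\{e_i(x)=x^i\}_{i=0}^{n}$ is $A(u)$. By the binomial theorem,
\[
(T_{A(u)} e_i)(x) \;=\; \sum_{j=0}^{i} \binom{i}{j} u^j (1-u)^{i-j} x^j \;=\; (ux + 1 - u)^i,
\]
so $T_{A(u)}$ is the affine substitution $f(x) \mapsto f(ux + 1 - u)$. Analogously, the diagonal matrix $D(u) := \mathrm{diag}(1, u, u^2, \dots, u^n)$ corresponds to the scaling operator $T_{D(u)} : f(x) \mapsto f(ux)$, since $T_{D(u)} e_i = u^i e_i$.

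The heart of the argument is then a short conjugation calculation. The affine map $x \mapsto ux + 1 - u$ has $1$ as its fixed point, so conjugating by the shift $T_P : f(x) \mapsto f(x+1)$ (which moves the fixed point to the origin) straightens it into the pure dilation. A direct computation gives
\[
(T_P T_{A(u)} T_{P^{-1}} f)(x) \;=\; f(u x) \;=\; (T_{D(u)} f)(x),
\]
so, exactly as with $T_Q = T_P T_G T_{P^{-1}}$ in Lemma~\ref{lem:Q}, passing to matrices yields $P^{-1} A(u) P = D(u)$. In particular $A(u) \in V_P(n)$.

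For the basis statement, I would use that $X \mapsto P^{-1} X P$ is a linear isomorphism from $V_P(n)$ onto the $(n+1)$-dimensional space of $(n+1) \times (n+1)$ diagonal matrices, so $\dim V_P(n) = n+1$. Under this isomorphism $A(u_i)$ is sent to $D(u_i)$, and $\{D(u_0), \dots, D(u_n)\}$ spans the diagonal subspace iff the rows $(1, u_i, u_i^2, \dots, u_i^n)$ are linearly independent, i.e.\ iff the Vandermonde determinant $\prod_{k<\ell}(u_\ell - u_k)$ is nonzero; this is equivalent to the $u_i$ being pairwise distinct.

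No serious obstacle is expected here. The only delicate point is to keep track of the paper's slightly unconventional matrix--operator correspondence (fixed in the proof of Lemma~\ref{lem:Q}), under which $T_P T_{A(u)} T_{P^{-1}}$ has matrix $P^{-1} A(u) P$ rather than $P A(u) P^{-1}$.
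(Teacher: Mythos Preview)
Your argument is correct. It differs from the paper's own proof of this lemma, which is a direct entrywise computation: the paper expands $(P^{-1}A(u)P)_{ij}$ as a triple sum in binomial coefficients and then collapses it to $u^j\delta_{ij}$ via the multinomial theorem. Your route instead reuses the operator--polynomial dictionary from the proof of Lemma~\ref{lem:Q}, identifying $A(u)$ with the affine substitution $f\mapsto f(ux+1-u)$ and conjugating by the shift $T_P$ to obtain the dilation $f\mapsto f(ux)$. This is exactly the approach the paper itself sketches later, in the remark closing the stochastic subsection (where $T_\mu f(x)=\int_0^1 f(xu+1-u)\,\mu(du)$ is introduced); you are effectively specialising that remark to $\mu=\delta_u$. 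The gain of your approach is conceptual clarity and avoidance of the multinomial identity; the paper's direct computation, on the other hand, is fully self-contained and does not rely on tracking the row-vector matrix convention. Your handling of that convention (so that $T_P T_{A(u)} T_{P^{-1}}$ corresponds to $P^{-1}A(u)P$) and the Vandermonde argument for the basis statement are both fine.
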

\begin{proof}
By the direct computation, 
\begin{align*}
& (P^{-1}A(u)P)_{ij} =\sum_{k=0}^n \sum_{l=0}^n (P^{-1})_{ik}A(u)_{kl}P_{lj}=  \sum_{k=0}^{i} \sum_{l=0}^{k} (P^{-1})_{ik}A(u)_{kl}P_{lj} \\
& = \1_{\{i \ge j \}} \sum_{k=0}^{i} \sum_{l=0}^{k} (-1)^{i-k} \binom{i}{k} \binom{k}{l}u^l(1-u)^{k-l}\binom{l}{j} \\
& = \1_{\{i \ge j \}} u^j \binom{i}{j} \sum_{k=0}^{i} \sum_{l=0}^{k}\frac{(i-j)!}{(i-k)!(k-l)!(l-j)!}  (-1)^{i-k} u^{l-j} (1-u)^{k-l}.
\end{align*}
Then, by multinomial theorem, $ (P^{-1}A(u)P)_{ij} = u^j  \delta_{ij}$.
\end{proof}

\begin{remark}
When $0 \le u \le 1$, $A(u)$ is a stochastic matrix.
\end{remark}

\begin{theorem}\label{thm:stoch}
Let $\mu$ be a Borel probability measure on $[0,1]$. Then the matrix $A_{\mu}:=\int_0^1A(u)\mu (du)$ is in $V_P(\infty) \cap M_{L,stoch}$ and hence in $M^{*,weak}_{L,stoch}$. In particular, if $\mathrm{support}(\mu) \not\subset  \{0,1\}$, then $A_{\mu}$ is in $M^{*}_{L,stoch}$.
\end{theorem}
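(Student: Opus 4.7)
The plan is to combine Lemma \ref{lem:Au}, Proposition \ref{prop:inj}, and the criterion established earlier that upgrades weak anti-diagonal eigenvalue property to the full property whenever the values $\{(-1)^i x_{ii}\}$ are pairwise distinct.

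First, I would define $A_\mu$ entrywise by $(A_\mu)_{ij} = \int_0^1 A(u)_{ij}\,\mu(du)$, which is well defined since each $A(u)_{ij} = \binom{i}{j}u^j(1-u)^{i-j}\1_{\{i \ge j\}}$ lies in $[0,1]$ for $u \in [0,1]$. Lower triangularity and nonnegativity of $A_\mu$ are immediate, while the row-sum identity follows from the binomial theorem $\sum_j A(u)_{ij} = (u + (1-u))^i = 1$ together with Tonelli's theorem, so $A_\mu \in M_{L,stoch}$. For each $n$, the leading principal submatrix $A_\mu|_n$ is an integral of finite matrices, so conjugation by the fixed matrix $P$ commutes with the integral; by Lemma \ref{lem:Au},
\begin{equation*}
P^{-1}(A_\mu|_n)P = \int_0^1 \mathrm{diag}(1, u, u^2, \dots, u^n)\,\mu(du) = \mathrm{diag}(m_0, m_1, \dots, m_n),
\end{equation*}
where $m_k := \int_0^1 u^k\,\mu(du)$. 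Since this is diagonal, $A_\mu|_n \in V_P(n)$ for every $n$, i.e.\ $A_\mu \in V_P(\infty)$. Proposition \ref{prop:inj} then gives $A_\mu \in M^{*,weak}_L(\infty,\C)$, and combined with stochasticity, $A_\mu \in M^{*,weak}_{L,stoch}$.

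For the \emph{in particular} part, I would invoke the criterion from the general discussion: weak anti-diagonal eigenvalue property together with pairwise distinctness of $\{(-1)^i x_{ii}\}$ implies anti-diagonal eigenvalue property. Since $(A_\mu)_{ii} = m_i$, it suffices to show that $\mathrm{support}(\mu) \not\subset \{0,1\}$ forces the sequence $\{(-1)^i m_i\}_{i \ge 0}$ to be pairwise distinct. Suppose $(-1)^i m_i = (-1)^j m_j$ for some $i < j$. If $i,j$ have the same parity, then $\int_0^1 u^i(1 - u^{j-i})\,\mu(du) = 0$; the integrand is nonnegative on $[0,1]$ and vanishes only at $u \in \{0,1\}$, forcing $\mathrm{support}(\mu) \subset \{0,1\}$. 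If $i,j$ have opposite parity, then $m_i = -m_j$, and nonnegativity gives $m_i = m_j = 0$; since $m_0 = 1$ we must have $i \ge 1$, whence $\int_0^1 u^i\,\mu(du) = 0$ forces $\mu = \delta_0$, again contained in $\{0,1\}$. Contrapositively, the support hypothesis yields the required distinctness at every level, so each finite leading principal submatrix has anti-diagonal eigenvalue property, giving $A_\mu \in M^*_{L,stoch}$.

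I expect no substantive obstacle here: the theorem is essentially a packaging of Lemma \ref{lem:Au}, with the moment analysis of $\mu$ on $[0,1]$ supplying the distinctness needed in the second half. The only minor subtlety is the separate role of $m_0 = 1$ in the opposite-parity case, which is handled immediately by the observation that two nonnegative moments can be negatives of one another only when both vanish.
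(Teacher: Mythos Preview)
Your proof is correct and follows essentially the same route as the paper: use Lemma \ref{lem:Au} to diagonalize $A_\mu$ by $P$, read off $V_P(\infty)$ and stochasticity, then argue distinctness of the $(-1)^i m_i$ from the support hypothesis. Your treatment of the distinctness step is in fact slightly more careful than the paper's, which writes $\int_0^1 (u^j-u^i)\,\mu(du)=0$ without separating the opposite-parity case; your explicit handling of that case (via $m_i=-m_j\ge 0$ forcing both to vanish, hence $\mu=\delta_0$) fills a small gap the paper glosses over.
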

\begin{proof}
By Lemma \ref{lem:Au}, we can show that 
\[
(P^{-1}A_{\mu}P)_{ij}=  \int_0^1 (P^{-1}A(u)P)_{ij} \mu (du) = \delta_{ij}\int_0^1 u ^j \mu (du).
\]
Then, since $P,A_{\mu}$ and $P^{-1}$ are triangular, $ P^{-1}|_nA_{\mu}|_nP|_n =( P^{-1}A_{\mu}P)|_n$ and so $A_{\mu} \in V_P(\infty)$. On the other hand, by the definition, it is obvious that $A_{\mu}$ is a stochastic matrix. Finally, we see that if $\mathrm{support}(\mu) \not\subset  \{0,1\}$, then $(-1)^i(A_{\mu})_{ii} \neq (-1)^j (A_{\mu})_{jj}$ for all $i \neq j$. In fact, if there exists a pair $i > j$ such that $(-1)^i (A_{\mu})_{ii} = (-1)^j (A_{\mu})_{jj}$, then
\begin{displaymath}
\int_0^1 (u^j-u^i)\mu (du) = \int_0^1 u^j(1-u^{i-j})\mu (du) =0.
\end{displaymath}
Namely, $\mathrm{support}(\mu) \subset  \{0,1\}$.
\end{proof}

\begin{example}\label{ex:TSproof}
If $\mu$ is the Lebesgue measure on $[0,1]$, then $A_{\mu}=T$. If $\mu=\delta_{1/2}$, then $A_{\mu}=S$.
\end{example}

\begin{remark}\label{rem:exchangeable}
The matrix $A_{\mu}$ appears naturally in the context of classical probability theory as an expectation of a random $0$-$1$ matrix identified with \lq\lq mixtured Bernoulli increasing process". Precisely, let $Y_1,Y_2,\ldots$ be an infinite sequence of exchangeable binary random variables, i.e., $Y_i \in \{0,1\}$ and 
\begin{equation*}
(Y_1,Y_2,\ldots) \overset{(d)}{=}(Y_{\sigma(1)},Y_{\sigma(2)},\ldots)
\end{equation*}
for any finite permutation $\sigma \in \bigcup_{n=1}^{\infty}\mathcal{S}_n$. By de Finetti's theorem, $(Y_1,Y_2,\ldots)$ is a mixture of a sequence of i.i.d. Bernoulli random variables, that is, there exists a probability measure $\mu$ on $[0,1]$ such that for all i 
\begin{equation*}
\text{Prob}(Y_1=a_1,\ldots,Y_i=a_i)=\int_0^1u^j(1-u)^{i-j} \mu(du)
\end{equation*}
when $(a_1,a_2,\ldots,a_i) \in \{0,1\}^i$ and $\sum_{k=1}^i a_k= j$. Therefore, setting $S_i=\sum_{k=1}^i Y_k$ $(S_0=0)$ we have $(A_{\mu})_{ij}=\text{Prob}(S_i=j)$, and hence $A_{\mu}=E[B]$ with $B=(1_{\{S_i=j\}})_{i,j=0}^{\infty} \in M_L(\infty,\C)$. Here $\mu$ is the limiting distribution of the sequence $\frac{S_n}{n}$ (cf.\cite{A}).
\end{remark}

By the above theorem, we can define a map $\mathcal{A}:\mathcal{P}([0,1]) \to V_P(\infty) \cap M_{L,stoch}$ as $\mathcal{A}(\mu)=A_{\mu}$ where $\mathcal{P}([0,1])$ is a set of probability measures on $[0,1]$.

Now, our interest is whether $\mathcal{A}$ is injective or not, and surjective or not. To study this problem, we introduce some notions. 
\begin{definition}
A sequence of real numbers $(a_i)_{i \in \N_0}$ is said to be completely monotone if its difference sequences satisfy the equation
\begin{equation}
(-1)^j (\Delta^j a)_i \ge 0
\end{equation}
for all $i,j \ge 0$. Here, $\Delta$ is the difference operator given by
\begin{equation}
(\Delta a)_i = a_{i+1}-a_i.
\end{equation}
\end{definition}

Now, recall a beautiful result by Hausdorff :
\begin{proposition}[Hausdorff,\cite{H}]\label{prop:Hausdorff}
For any completely monotone sequence $(a_n)_{n \in \N_0}$ satisfying $a_0=1$, there exists a unique probability measure $\mu$ on $[0,1]$ such that $\int_0^1 u^i\mu(du)=a_i$. 
\end{proposition}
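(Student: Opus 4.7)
The plan is to prove Hausdorff's theorem by the classical route: associate to $(a_i)$ a natural sequence of discrete probability measures whose weights are forced to be non-negative by complete monotonicity, extract a weak subsequential limit by compactness of $\mathcal{P}([0,1])$, verify that the moments of the limit are exactly $a_i$, and conclude uniqueness from Stone--Weierstrass.

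First, set $p_n^k := \binom{n}{k}(-1)^{n-k}(\Delta^{n-k}a)_k$ for $0\le k\le n$, and let $\mu_n := \sum_{k=0}^n p_n^k\,\delta_{k/n}$. By the complete-monotonicity hypothesis each $p_n^k\ge 0$. Expanding the finite difference as $(-1)^{n-k}(\Delta^{n-k}a)_k = \sum_{j=0}^{n-k}\binom{n-k}{j}(-1)^j a_{k+j}$, making the change of index $l=k+j$, and using $\binom{n}{k}\binom{n-k}{l-k}=\binom{n}{l}\binom{l}{k}$, I would obtain
\[
\sum_{k=0}^n p_n^k \;=\; \sum_{l=0}^n \binom{n}{l}\,a_l\,\sum_{k=0}^l (-1)^{l-k}\binom{l}{k} \;=\; a_0 \;=\; 1,
\]
so $\mu_n$ is indeed a probability measure on $[0,1]$. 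The same manipulation with an additional factor $(k/n)^m$, combined with the standard identity $\sum_{k=0}^l k^m(-1)^{l-k}\binom{l}{k}=l!\,S(m,l)$ (where $S(m,l)$ denotes the Stirling number of the second kind, vanishing for $l>m$ and equal to $1$ for $l=m$), gives
\[
\int_0^1 u^m\,d\mu_n \;=\; \sum_{l=0}^{\min(m,n)} \frac{n!}{(n-l)!\,n^m}\,S(m,l)\,a_l.
\]
Since $n!/((n-l)!\,n^m)\to \delta_{l,m}$ as $n\to\infty$ for each fixed $l\le m$, the right-hand side converges to $a_m$.

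Existence then follows by Helly's selection theorem (equivalently Prokhorov on the compact interval $[0,1]$): a subsequence $\mu_{n_j}$ converges weakly to some $\mu\in\mathcal{P}([0,1])$, and weak convergence preserves integrals of the continuous functions $u\mapsto u^m$, whence $\int_0^1 u^m\,d\mu=a_m$ for every $m\ge 0$. For uniqueness, two probability measures on $[0,1]$ with identical moments agree on all polynomials, and by Stone--Weierstrass polynomials are dense in $C([0,1])$, so the Riesz representation theorem forces the two measures to coincide.

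The main obstacle is the moment identity for $\mu_n$: the weights $p_n^k$ are essentially forced by the shape of the complete-monotonicity condition, but one must verify by a careful combinatorial unfolding that these weights sum to $1$ and that their pushforward moments converge to $a_m$ for each $m$. Once this Bernstein-type identity is in hand, everything else reduces to soft compactness on $[0,1]$ and polynomial density.
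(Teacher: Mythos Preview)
Your argument is correct and is essentially the classical Bernstein--Hausdorff proof: the weights $p_n^k=\binom{n}{k}(-1)^{n-k}(\Delta^{n-k}a)_k$ are nonnegative by hypothesis, the binomial rearrangement shows $\sum_k p_n^k=a_0=1$, the Stirling-number identity gives the moment asymptotics, and compactness of $\mathcal{P}([0,1])$ together with polynomial density finishes existence and uniqueness. All steps check out.

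As for comparison with the paper: there is nothing to compare. The paper does not prove Proposition~\ref{prop:Hausdorff}; it quotes it as a classical result of Hausdorff~\cite{H} and uses it as a black box to define the map $\mathcal{M}:C\to\mathcal{P}([0,1])$. Your write-up therefore supplies a proof where the paper offers only a citation.
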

Namely, we can define an injective map $\mathcal{M}: C \to \mathcal{P}([0,1])$ where $C$ denotes the set of completely monotone sequence of real numbers satisfying its first term is $1$; $C:=\{(a_i)_{i \in \N_0}; (a_i)_{i \in \N_0} \ \text{is completely monotone}, a_0=1 \}$ and $a_i=\int_0^1 u^i \mathcal{M}(a)(du), i \in \N_0$. Then, we have the following lemma.

\begin{lemma}\label{lem:id1}
$\Phi \circ \mathcal{A} \circ \mathcal{M} = \mathrm{Id}$.
\end{lemma}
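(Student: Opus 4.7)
The plan is to unwind the three maps in order and observe that the composition reduces to the defining property of $\mathcal{M}$. Fix $a=(a_i)_{i\in\N_0}\in C$. By definition of $\mathcal{M}$ (i.e., Proposition \ref{prop:Hausdorff}), the measure $\mu:=\mathcal{M}(a)\in \mathcal{P}([0,1])$ is the unique Borel probability measure on $[0,1]$ satisfying
\begin{equation*}
\int_0^1 u^i\,\mu(du)=a_i,\qquad i\in\N_0.
\end{equation*}

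Next I would apply $\mathcal{A}$. Since $A_\mu=\int_0^1 A(u)\,\mu(du)$ is defined entrywise, the $(i,j)$ entry is
\begin{equation*}
(A_\mu)_{ij}=\int_0^1 \binom{i}{j}u^j(1-u)^{i-j}\1_{\{i\ge j\}}\,\mu(du).
\end{equation*}
In particular, the diagonal entries are
\begin{equation*}
(A_\mu)_{ii}=\int_0^1 u^i\,\mu(du)=a_i,
\end{equation*}
where the last equality is the defining property of $\mu=\mathcal{M}(a)$.

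Finally I would apply $\Phi$. By definition, $\Phi(A_\mu)=\big((A_\mu)_{ii}\big)_{i\in\N_0}=(a_i)_{i\in\N_0}=a$, which gives $\Phi\circ\mathcal{A}\circ\mathcal{M}(a)=a$ for every $a\in C$. Note that to apply $\Phi$ legitimately one must know $A_\mu\in M^{*,weak}_L(\infty,\C)$, but this is already established in Theorem \ref{thm:stoch}.

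There is essentially no obstacle here: the claim is a direct consequence of the observation $A(u)_{ii}=u^i$ together with the defining moment identity for $\mathcal{M}$, and the only thing to verify is that entry-evaluation commutes with the integral $\int_0^1 A(u)\,\mu(du)$, which holds by construction.
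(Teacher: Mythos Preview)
Your proof is correct and follows essentially the same approach as the paper: unwind the definitions to see that $(\Phi\circ\mathcal{A}\circ\mathcal{M}(a))_i=(A_{\mathcal{M}(a)})_{ii}=\int_0^1 u^i\,\mathcal{M}(a)(du)=a_i$. The paper compresses this into a single line, while you spell out each step and add the (technically necessary) remark that $A_\mu$ lies in the domain of $\Phi$, but the argument is the same.
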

\begin{proof}
By the definition, for any $a=(a_i) \in C$, $(\Phi \circ \mathcal{A} \circ \mathcal{M} (a) )_i= (\mathcal{A} \circ \mathcal{M} (a) )_{ii}= \int_{0}^1u^i\mathcal{M}(a)(du)=a_i$ for $i \in \N_0$.
\end{proof}

Next, we will show that $\Phi(V_P(\infty) \cap M_{L,stoch}) \subset C$. We prepare simple lemmas.

\begin{lemma}
For a sequence of real numbers $(a_i)_{i \in \N_0}$, 
\begin{equation}\label{eq:difference}
(\Delta^j a)_i = \sum_{k=0}^j \binom{j}{k}(-1)^{j-k}a_{k+i}.
\end{equation}
 \end{lemma}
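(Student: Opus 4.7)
The plan is to prove the identity by a clean operator-theoretic argument, with an induction on $j$ as a fallback. Introduce the shift operator $E$ on sequences by $(Ea)_i = a_{i+1}$ and the identity operator $I$. Then by definition $\Delta = E - I$, and since $E$ and $I$ commute, the ordinary binomial theorem applies to give
\begin{equation*}
\Delta^j = (E-I)^j = \sum_{k=0}^{j}\binom{j}{k}(-1)^{j-k}E^k.
\end{equation*}
Applying this operator identity to the sequence $a$ and evaluating the resulting sequence at index $i$, and noting that $(E^k a)_i = a_{i+k}$, yields exactly \eqref{eq:difference}.

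If a more elementary presentation is preferred, I would instead proceed by induction on $j$. The base case $j=0$ is trivial since both sides equal $a_i$. For the inductive step, I would write
\begin{equation*}
(\Delta^{j+1}a)_i = (\Delta^j a)_{i+1} - (\Delta^j a)_i
\end{equation*}
and substitute the inductive hypothesis. A reindexing $k \mapsto k-1$ in the first sum, followed by combining coefficients via Pascal's identity $\binom{j}{k-1}+\binom{j}{k}=\binom{j+1}{k}$ for $1 \le k \le j$ and checking the boundary terms $k=0$ and $k=j+1$ separately, produces the formula with $j$ replaced by $j+1$.

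There is no real obstacle here: the identity is a direct consequence of the commutativity of $E$ and $I$ and the binomial theorem, and the inductive argument is a routine rearrangement using Pascal's rule. The only thing to be careful about is sign bookkeeping in the reindexing step, which the operator proof bypasses entirely.
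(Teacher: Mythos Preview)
Your proof is correct. The paper's own proof is simply the one-line remark ``This is proved by the mathematical induction on $j$,'' which coincides with your fallback argument. Your primary operator-theoretic route---writing $\Delta = E - I$ and applying the binomial theorem to commuting operators---is a genuinely different and slicker approach: it avoids the reindexing and Pascal's-rule bookkeeping entirely, at the modest cost of invoking the binomial theorem for commuting operators (which is itself usually proved by induction). Either argument is entirely adequate for such a standard identity.
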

\begin{proof}
This is proved by the mathematical induction on $j$. 
\end{proof}
\begin{lemma}\label{lem:differenceop}
Assume $X=(x_{ij}) \in V_P(\infty)$. Then, $(-1)^{i-j}x_{ij}=\1_{\{i \ge j\}}\binom{i}{j} (\Delta^{i-j} \la)_j $ where $(\la_i)_{i \in \N_0}=\Phi(X)$.
 \end{lemma}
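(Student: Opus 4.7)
The plan is to compute $x_{ij}$ directly from the decomposition $X=PDP^{-1}$ and then recognize the resulting sum as a finite difference of the diagonal sequence.

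Since $X\in V_P(\infty)$, for each $n$ the matrix $P^{-1}|_n X|_n P|_n$ is diagonal; because $P$ is lower triangular with unit diagonal, the diagonal entries must be $x_{ii}=\la_i$. Hence $X=PDP^{-1}$ with $D=\operatorname{diag}(\la_0,\la_1,\dots)$. First I would write out the $(i,j)$ entry for $i\ge j$ (the case $i<j$ is trivial since $X$ is lower triangular):
\begin{equation*}
x_{ij}=\sum_{k=j}^{i} P_{ik}\,\la_k\,(P^{-1})_{kj}=\sum_{k=j}^{i}\binom{i}{k}\binom{k}{j}(-1)^{k-j}\la_k.
\end{equation*}

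Next I would apply the standard identity $\binom{i}{k}\binom{k}{j}=\binom{i}{j}\binom{i-j}{k-j}$, factor $\binom{i}{j}$ out, and reindex with $m=k-j$:
\begin{equation*}
x_{ij}=\binom{i}{j}\sum_{m=0}^{i-j}\binom{i-j}{m}(-1)^{m}\la_{j+m}.
\end{equation*}
By the preceding Lemma (equation \eqref{eq:difference}) applied with shift $i-j$ at position $j$,
\begin{equation*}
(\Delta^{i-j}\la)_j=\sum_{m=0}^{i-j}\binom{i-j}{m}(-1)^{(i-j)-m}\la_{j+m},
\end{equation*}
so multiplying by $(-1)^{i-j}$ converts the sign pattern $(-1)^{(i-j)-m}$ into $(-1)^{m}$. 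Therefore the sum in the displayed formula equals $(-1)^{i-j}(\Delta^{i-j}\la)_j$, which gives
\begin{equation*}
x_{ij}=(-1)^{i-j}\binom{i}{j}(\Delta^{i-j}\la)_j,
\end{equation*}
i.e.\ $(-1)^{i-j}x_{ij}=\binom{i}{j}(\Delta^{i-j}\la)_j$, as desired.

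There is no real obstacle here; the only thing to be careful about is the bookkeeping of signs, since $(-1)^{i-j}$ and $(-1)^{m}$ together with the $(-1)^{(i-j)-m}$ coming from $\Delta^{i-j}$ must all line up. The indicator $\1_{\{i\ge j\}}$ appears automatically because the $P$-sum is empty when $i<j$ (and $X$ is lower triangular by construction from $PDP^{-1}$ with $P,D,P^{-1}$ all lower triangular).
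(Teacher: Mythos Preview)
Your proof is correct and follows essentially the same route as the paper's: both write $x_{ij}=(PDP^{-1})_{ij}$, use the binomial identity $\binom{i}{k}\binom{k}{j}=\binom{i}{j}\binom{i-j}{k-j}$ (the paper applies it implicitly via the reindexing $k\mapsto k+j$), and then invoke equation~\eqref{eq:difference}. Your explicit tracking of the sign conversion between $(-1)^m$ and $(-1)^{(i-j)-m}$ is a slight elaboration, but the argument is the same.
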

 \begin{proof}
Since $X \in V_P(\infty)$, $X|_n= P DP^{-1}$ for some $D=diag(d_0,d_1,\dots,d_n)$. Then, by the exact computation, $x_{ii}= P_{ii}d_iP^{-1}_{ii}=d_i$. Therefore, for $i \ge j$,
\begin{align*}
x_{ij} & = (P DP^{-1})_{ij}=\sum_{k=j}^i \binom{i}{k}x_{kk} (-1)^{k-j} \binom{k}{j} \\
& =\sum_{k=0}^{i-j} \binom{i}{k+j}\binom{k+j}{j} (-1)^k \la_{k+j} =\binom{i}{j}\sum_{k=0}^{i-j} \binom{i-j}{k} (-1)^k \la_{k+j}.
\end{align*}
So, (\ref{eq:difference}) completes the proof.
\end{proof}

\begin{proposition}\label{prop:domain}
Assume $X \in V_P(\infty) \cap M_{L,stoch}$. Then, $\Phi(X) \in C$.
\end{proposition}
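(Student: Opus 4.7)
The plan is to read off complete monotonicity of $\Phi(X)=(x_{ii})_{i\in\N_0}$ directly from the nonnegativity of the entries of $X$, using the explicit formula for $x_{ij}$ in terms of differences of the diagonal sequence provided by Lemma \ref{lem:differenceop}.

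First I would verify the normalization $x_{00}=1$. Since $X\in M_{L,stoch}$, the $0$-th row sums to $1$, and because $X$ is lower triangular this row has only the single nonzero entry $x_{00}$, so $x_{00}=1$, giving $\lambda_0=1$ where $\lambda=(\lambda_i)=\Phi(X)$.

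Next I would apply Lemma \ref{lem:differenceop}: for every $i\ge j$,
\begin{equation*}
\binom{i}{j}(\Delta^{i-j}\lambda)_j=(-1)^{i-j}x_{ij}.
\end{equation*}
Setting $k=i-j$ and rearranging,
\begin{equation*}
(-1)^{k}(\Delta^{k}\lambda)_{j}=\binom{k+j}{j}^{-1}x_{k+j,\,j}\ge 0,
\end{equation*}
where nonnegativity follows from the stochasticity hypothesis $x_{ij}\ge 0$. This is exactly the definition of a completely monotone sequence, so together with $\lambda_0=1$ we conclude $\Phi(X)\in C$.

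There is no serious obstacle: the proposition is a direct translation of the nonnegativity of the off-diagonal entries of $X$ into the difference inequalities on its diagonal, once one has the identity $(-1)^{i-j}x_{ij}=\binom{i}{j}(\Delta^{i-j}\lambda)_j$ established in Lemma \ref{lem:differenceop}. The only thing to be slightly careful about is the index substitution $k=i-j$ so that the sign $(-1)^k$ appearing in the definition of complete monotonicity matches the sign produced by Lemma \ref{lem:differenceop}; this matching is automatic and requires no further computation.
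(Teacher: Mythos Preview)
Your proof is correct and follows essentially the same route as the paper's: both invoke Lemma \ref{lem:differenceop} to convert $x_{ij}\ge 0$ into $(-1)^{i-j}(\Delta^{i-j}\lambda)_j\ge 0$, and both use the lower-triangular stochastic condition to obtain $\lambda_0=x_{00}=1$. The only cosmetic difference is the order in which the two observations are made.
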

\begin{proof}
Since $X$ is stochastic, $x_{ij} \ge 0$ for all $i,j$. By Lemma \ref{lem:differenceop}, we have $(-1)^{i-j}(\Delta^{i-j} \la)_j \ge 0$ for all $i \ge j$ where $(\la_i)_{i \in \N_0}=\Phi(X)$ which implies the sequence $\Phi(X)$ is completely monotone. Also, since $X$ is triangular and stochastic, $X_{00}=\Phi(X)_0=1$. Namely, $\Phi(X) \in C$.
\end{proof}



Now, we have following simple relations.

\begin{proposition}\label{prop:id2}
The following hold:

(i) $\mathcal{A} \circ \mathcal{M} \circ \Phi|_{V_P(\infty) \cap M_{L,stoch}}  = \mathrm{Id}$.

(ii) $\mathcal{M} \circ \Phi|_{V_P(\infty) \cap M_{L,stoch}} \circ  \mathcal{A}  = \mathrm{Id}$.
\end{proposition}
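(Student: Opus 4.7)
The plan is to reduce both identities to three pillars that are already in hand. First, any $X \in V_P(\infty)$ is rigidly determined by its diagonal $\Phi(X)$: on each truncation $X|_n = P|_n D|_n P|_n^{-1}$ with $D|_n$ diagonal, and since $P$ and $P^{-1}$ are lower unitriangular, a direct inspection gives $(P^{-1}XP)_{ii} = x_{ii}$, so $D|_n = \mathrm{diag}(x_{00},\dots,x_{nn})$. Second, by Lemma~\ref{lem:Au} the diagonal of $A(u)$ is $(u^i)_{i \in \N_0}$, hence by Fubini $(A_\mu)_{ii} = \int_0^1 u^i\,\mu(du)$. Third, Proposition~\ref{prop:Hausdorff} says precisely that $\mathcal{M}$ is a one-sided inverse of the moment map on $C$.

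For (ii), I start with $\mu \in \mathcal{P}([0,1])$. By Theorem~\ref{thm:stoch}, $\mathcal{A}(\mu)=A_\mu \in V_P(\infty)\cap M_{L,stoch}$, so $\Phi(A_\mu)$ is defined; pillar two evaluates it to the moment sequence $\bigl(\int_0^1 u^i\,\mu(du)\bigr)_{i\in\N_0}$. Applying $\mathcal{M}$ recovers $\mu$ by the uniqueness half of Hausdorff's theorem, which is exactly what makes $\mathcal{M}$ well-defined. (Equivalently, one may invoke Lemma~\ref{lem:id1}, since $\Phi(A_\mu) \in C$ by Proposition~\ref{prop:domain}.)

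For (i), I start with $X \in V_P(\infty)\cap M_{L,stoch}$. Proposition~\ref{prop:domain} guarantees $\Phi(X) \in C$, so $\mu := \mathcal{M}(\Phi(X)) \in \mathcal{P}([0,1])$ is well-defined and its moments are $\int_0^1 u^i\,\mu(du) = x_{ii}$. Pillar two then gives $\Phi(\mathcal{A}(\mu)) = (x_{ii})_{i\in\N_0} = \Phi(X)$. Both $X$ and $\mathcal{A}(\mu)$ lie in $V_P(\infty)$ (the latter by Theorem~\ref{thm:stoch}) and share a diagonal, so pillar one forces $\mathcal{A}(\mu) = X$, i.e.\ $(\mathcal{A}\circ\mathcal{M}\circ\Phi)(X)=X$.

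No step here is really hard — the proposition is essentially a compatibility statement tying together three bijection-type facts already proved. The one place where care is required is pillar one: if one tried to state it for arbitrary matrices it would be false, and it is the combination of $X$ being lower triangular with $P$ being lower unitriangular that pins the diagonal of $D|_n$ down to $(x_{00},\dots,x_{nn})$, which is what makes matrices in $V_P(\infty)$ uniquely reconstructible from $\Phi$.
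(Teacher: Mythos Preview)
Your proof is correct and follows essentially the same approach as the paper: for (i) the paper also reduces to showing $\Phi(X)=\Phi((\mathcal{A}\circ\mathcal{M}\circ\Phi)(X))$ via the injectivity of $\Phi$ on $V_P(\infty)$ (your pillar one), and for (ii) it also reduces to matching moments via $(A_\mu)_{ii}=\int_0^1 u^i\,\mu(du)$ and then invokes the uniqueness in Hausdorff's theorem. The only difference is presentational---you isolate the three ingredients explicitly, while the paper weaves them inline.
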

\begin{proof}
(i) By Proposition \ref{prop:domain}, for any $X \in V_P(\infty) \cap M_{L,stoch}$, we can define $(\mathcal{A} \circ \mathcal{M} \circ \Phi)(X)$ and by the definition, $(\mathcal{A} \circ \mathcal{M} \circ \Phi)(X) \in V_P(\infty)$. Therefore, to show $X= (\mathcal{A} \circ \mathcal{M} \circ \Phi)(X)$, we only need to show that $\Phi(X)=  (\Phi \circ \mathcal{A} \circ \mathcal{M} \circ \Phi)(X)$. Since $\Phi( (\mathcal{A} \circ \mathcal{M} \circ \Phi)(X))_i= (\mathcal{A} \circ \mathcal{M} \circ \Phi)(X)_{ii}= \int_0^1 u^i (\mathcal{M} (\Phi(X) ))(du) =  \Phi(X)_i$ for $i \in \N_0$. Therefore, we have for any $X \in V_P(\infty) \cap M_{L,stoch}$, $(\mathcal{A} \circ \mathcal{M} \circ \Phi)(X)=X$.

(ii) For $\mu \in \mathcal{P}([0,1])$, to show $\mu= (\mathcal{M} \circ \Phi|_{V_P(\infty) \cap M_{L,stoch}} \circ  \mathcal{A}) (\mu)$, it is enough to prove that $\int_0^1 u^i \mu (du) = \int_0^1 u^i (\mathcal{M} \circ \Phi|_{V_P(\infty) \cap M_{L,stoch}} \circ  \mathcal{A}) (\mu) (du)$ since $\mathcal{M}$ is a map. By the definition, $ \int_0^1 u^i (\mathcal{M} \circ \Phi|_{V_P(\infty) \cap M_{L,stoch}} \circ  \mathcal{A}) (\mu) (du) = (\Phi|_{V_P(\infty) \cap M_{L,stoch}} \circ  \mathcal{A}) (\mu)_i =  \mathcal{A} (\mu)_{ii} = \int_0^1 u^i \mu(du)$, and the proof is completed.

\end{proof}

\begin{remark}
Combining Lemma \ref{lem:id1} and Proposition \ref{prop:id2}, we have $\mathcal{A}$, $\mathcal{M}$ and $\Phi : V_P(\infty) \cap M_{L,stoch} \to C$ are all bijections.
\end{remark}

Finally, we have a characterization of matrices as follows.

\begin{theorem}
For $X \in M_L(\infty,\C)$, the following conditions are equivalent:

(i) $X \in V_P(\infty) \cap M_{L,stoch}$.

(ii) $X= A_{\mu}$ for some $\mu \in \mathcal{P}([0,1])$.

(iii) $X= \Pi ( \la)$ for some $\la=(\la_i)_{i \in \N_0} \in C$.

Moreover, if the above conditions are satisfied, then $X \in M^{*,weak}_{L,stoch}$ and the probability measure $\mu$ given in (ii) is unique and $\mathcal{M}(\la)=\mu$ where $\la$ is the sequence given in (iii).
\end{theorem}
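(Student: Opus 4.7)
My plan is to establish the cycle of implications (ii)$\Rightarrow$(i)$\Rightarrow$(iii)$\Rightarrow$(ii), and then read off the uniqueness of $\mu$ and the identity $\mathcal{M}(\lambda)=\mu$ from Hausdorff's theorem. All three ingredients --- the stochastic-and-diagonalizable structure of $A_\mu$, the characterization of $\Phi(V_P(\infty) \cap M_{L,stoch})$ as the set $C$ of completely monotone sequences starting at $1$, and the bijection between $C$ and $\mathcal{P}([0,1])$ via moments --- have already been proved in the preceding results, so the argument is essentially an assembly.

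The implication (ii)$\Rightarrow$(i) is the first assertion of Theorem \ref{thm:stoch}. For (i)$\Rightarrow$(iii), I set $\lambda := \Phi(X)$, which by Proposition \ref{prop:domain} lies in $C$. Since $X \in V_P(\infty)$, every leading principal block $X|_n$ equals $P|_n D_n P|_n^{-1}$ for some diagonal $D_n$; and because $P$ is lower-triangular with unit diagonal, $D_n$ is forced to equal $\mathrm{diag}(x_{00},\ldots,x_{nn}) = \mathrm{diag}(\lambda_0,\ldots,\lambda_n)$. This identifies $X|_n$ with $\Pi(\lambda)|_n$ for every $n$, hence $X = \Pi(\lambda)$.

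For (iii)$\Rightarrow$(ii), I set $\mu := \mathcal{M}(\lambda) \in \mathcal{P}([0,1])$ via Proposition \ref{prop:Hausdorff}. By Theorem \ref{thm:stoch}, $A_\mu \in V_P(\infty)$, and its diagonal entries are $(A_\mu)_{ii} = \int_0^1 u^i\,\mu(du) = \lambda_i$ (the first equality from $A(u)_{ii}=u^i$, the second from the definition of $\mathcal{M}$). Since any element of $V_P(\infty)$ is uniquely determined by its diagonal via the $PDP^{-1}$ recipe used in the previous step, $A_\mu = \Pi(\lambda) = X$, establishing (ii).

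Finally, Proposition \ref{prop:inj} upgrades $X \in V_P(\infty) \cap M_{L,stoch}$ to $X \in M^{*,weak}_{L,stoch}$, which settles the first part of the \lq\lq moreover'' clause once the three conditions are known equivalent. Uniqueness of $\mu$ and the identity $\mathcal{M}(\lambda) = \mu$ both reduce to the uniqueness clause of Proposition \ref{prop:Hausdorff}: any $\mu$ realizing (ii) must satisfy $\int_0^1 u^i\,\mu(du) = x_{ii} = \lambda_i$ for every $i \in \N_0$, and at most one probability measure on $[0,1]$ has this completely monotone moment sequence. I do not anticipate any serious obstacle; the only subtlety, used twice above, is that an element of $V_P(\infty)$ is pinned down by its diagonal, which is immediate from the unit-diagonal triangular structure of $P$.
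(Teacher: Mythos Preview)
Your proof is correct and follows essentially the same route as the paper. The paper packages the argument as three composition identities (Lemma \ref{lem:id1} and Proposition \ref{prop:id2}) showing that $\mathcal{A}$, $\mathcal{M}$, and $\Phi|_{V_P(\infty)\cap M_{L,stoch}}$ are mutually inverse bijections, whereas you run the equivalent cycle (ii)$\Rightarrow$(i)$\Rightarrow$(iii)$\Rightarrow$(ii); both rely on the same key observation that an element of $V_P(\infty)$ is determined by its diagonal (which the paper uses in the proof of Proposition \ref{prop:id2}(i) and records earlier as the remark that $\Pi\circ\Phi=\mathrm{Id}$ on $V_P$).
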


\begin{theorem}
Let $\la=(\la_n)_{n \in \N_0} \in E$. Then, for $X=(x_{ij}) \in M_L(\infty,\C)$, the following conditions are equivalent:

(i) $X \in M^{*,weak}_{L,stoch}$ and $\Phi(X)=\la$.

(ii) $\la \in C$ and $X= A_{\mu}$ where $\mu=\mathcal{M}(\la)$.

(iii) $\la \in C$ and $X= \Pi(\la)$.

Moreover, if $\la \in \tilde{E}$, then if the above conditions are satisfied, $X \in M^*_{L,stoch}$.
\end{theorem}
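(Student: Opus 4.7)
The plan is to reduce the statement to the previous characterisation theorem, using the injectivity of $\Phi$ on $\Phi^{-1}(E)$ as the additional new ingredient. The hypothesis $\la\in E$ is precisely what forces uniqueness from weak spectral data, while $\la\in C$ (which appears in (ii) and (iii)) will be recovered automatically in the direction (i)$\Rightarrow$(iii) via Proposition \ref{prop:domain}.

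First I would establish (ii)$\Leftrightarrow$(iii), which in fact holds for any $\la\in C$ without invoking $\la\in E$. Both $A_{\mathcal{M}(\la)}$ and $\Pi(\la)$ lie in $V_P(\infty)$---the former by Theorem \ref{thm:stoch} and the latter by its definition $\Pi(\la)=PDP^{-1}$---and their diagonals both equal $\la$, by Lemma \ref{lem:id1} and by $\Phi\circ\Pi=\mathrm{Id}$ respectively. Since an element of $V_P(\infty)$ is determined by its diagonal, the two matrices coincide.

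The implication (iii)$\Rightarrow$(i) is immediate from the previous theorem: if $\la\in C$ then $\Pi(\la)\in V_P(\infty)\cap M_{L,stoch}\subset M^{*,weak}_{L,stoch}$, and $\Phi\circ\Pi=\mathrm{Id}$ gives $\Phi(X)=\la$. The main step is (i)$\Rightarrow$(iii). Given $X\in M^{*,weak}_{L,stoch}$ with $\Phi(X)=\la\in E$, I observe that $\Pi(\la)\in V_P(\infty)\subset M^{*,weak}_L(\infty,\C)$ by Proposition \ref{prop:inj}, and $\Phi(\Pi(\la))=\la$. Thus $X$ and $\Pi(\la)$ are two elements of $\Phi^{-1}(E)\cap M^{*,weak}_L(\infty,\C)$ with the same image under $\Phi$; the injectivity of $\Phi|_{\Phi^{-1}(E)}$ proved just before this theorem then forces $X=\Pi(\la)$. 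In particular $X\in V_P(\infty)\cap M_{L,stoch}$, and Proposition \ref{prop:domain} now gives $\la=\Phi(X)\in C$.

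For the moreover clause, assume $\la\in\tilde{E}$. Then for every $n$ the scalars $(-1)^i\la_i$ with $0\le i\le n$ are pairwise distinct, so by the remark following the definitions of the (weak) anti-diagonal eigenvalue properties, the weak property of each leading principal submatrix $X|_n$ automatically upgrades to the full anti-diagonal eigenvalue property; hence $X\in M^*_L\cap M_{L,stoch}=M^*_{L,stoch}$. The only non-routine point in this plan is the observation that $\Pi(\la)$ is simultaneously a competitor in $M^{*,weak}_L$ and a diagonaliser of the spectral data, which lets the injectivity of $\Phi$ on $\Phi^{-1}(E)$ collapse (i) onto (iii); once that is noted, the rest is a chaining of previously established identities.
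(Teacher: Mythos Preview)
Your proposal is correct and is precisely the argument the paper intends: the theorem is stated without proof, being an immediate consequence of the injectivity of $\Phi$ on $\Phi^{-1}(E)$ together with the preceding characterisation of $V_P(\infty)\cap M_{L,stoch}$ and Proposition~\ref{prop:domain}. Your chaining of these results (in particular the observation that $\Pi(\la)$ is a competitor in $\Phi^{-1}(E)$, forcing $X=\Pi(\la)$ and then $\la\in C$ via stochasticity) is exactly the intended route.
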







\begin{remark}
We can show the results in this subsection by considering the correspondence between $M(\infty,\C)$ and operators on polynomials. Namely, we define operators on polynomials $T_{\mu}$ for $\mu \in \mathcal{P}([0,1])$ and $T_P$ as
\begin{align*}
(T_{\mu}f)(x)=\int_0^1 f(xu+1-u)\mu(du), \quad (T_Pf)(x)=f(x+1).
\end{align*}
Then, the matrix representation of $T_{\mu}$ is $A_{\mu}$ and $T_P$ is $P$. Then, it is easy to see that
\begin{align*}
(T_{P}T_{\mu}T_{P^{-1}}f) (x) = \int_0^1 f(xu)\mu(du).
\end{align*}
From this expression, we see that $e_i(x)=x^i$ is an eigenfunction corresponding to the eigenvalue $\int_0^1 u^i\mu(du)$.  
\end{remark}

\subsubsection{Stochastic and symmetric case}

Next, we consider the stochastic and symmetric case. Here, we start from lemmas again.

\begin{lemma}
$A_{\mu} \in M_{L,sym}$ if and only if $\mu$ is invariant under the reflection with respect to $\frac{1}{2}$.
\end{lemma}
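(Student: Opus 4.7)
The plan is to reduce the symmetry condition on the matrix entries to a moment condition and invoke the uniqueness part of the Hausdorff moment problem (Proposition \ref{prop:Hausdorff}).

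First I would write out the entries explicitly: since $(A_\mu)_{ij} = \int_0^1 \binom{i}{j} u^j(1-u)^{i-j}\mu(du)$ for $i \ge j$, the symmetry condition $(A_\mu)_{i,i-j} = (A_\mu)_{i,j}$ becomes, after cancelling $\binom{i}{j} = \binom{i}{i-j}$,
\begin{equation*}
\int_0^1 u^{i-j}(1-u)^j \mu(du) = \int_0^1 u^j(1-u)^{i-j}\mu(du), \qquad 0 \le j \le i.
\end{equation*}

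For the \emph{if} direction, let $R\mu$ denote the pushforward of $\mu$ under $u \mapsto 1-u$. If $R\mu = \mu$, then changing variables $v = 1-u$ in the right-hand side integral immediately gives the equality, so $A_\mu$ is symmetric.

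For the \emph{only if} direction, the trick is to specialize to $j = i$, which gives
\begin{equation*}
\int_0^1 (1-u)^i \mu(du) = \int_0^1 u^i \mu(du), \qquad i \ge 0.
\end{equation*}
Rewriting the left-hand side as $\int_0^1 u^i (R\mu)(du)$, this says that $\mu$ and $R\mu$ have identical moments of every order. Both are Borel probability measures on $[0,1]$, so the uniqueness part of the Hausdorff moment problem (the uniqueness assertion underlying Proposition \ref{prop:Hausdorff}) forces $R\mu = \mu$, i.e., $\mu$ is invariant under reflection about $\tfrac12$.

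There is essentially no obstacle here: the only subtlety is recognizing that the full family of symmetry equations (indexed by $0 \le j \le i$) is overdetermined, and that the single subfamily $j=i$ already suffices thanks to the determinacy of measures on $[0,1]$ by their moments. (As a sanity check, one could also verify directly that the equations with general $j$ are consequences of $R\mu = \mu$, which we already established via the \emph{if} direction.)
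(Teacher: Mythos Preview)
Your proof is correct and follows essentially the same approach as the paper: both directions proceed identically, with the \emph{if} direction handled by the change of variables $u\mapsto 1-u$, and the \emph{only if} direction obtained by specializing the symmetry condition to the pair $(A_\mu)_{i,0}=(A_\mu)_{i,i}$ (your choice $j=i$) and then appealing to the uniqueness in the Hausdorff moment problem. The paper phrases the last step via the injectivity of the map $\mathcal{M}$, but the content is the same.
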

\begin{proof}
If $\mu$ is invariant under the reflection with respect to $\frac{1}{2}$, then for any $i \ge j$, $(A_{\mu})_{ij}= \binom{i}{j} \int_0^1 u^j (1-u)^{i-j} \mu(du)=  \binom{i}{j} \int_0^1 (1-u)^j u^{i-j} \mu(du) =  \binom{i}{i-j}  \int_0^1 u^{i-j} (1-u)^{i-(i-j)}  \mu(du) = (A_{\mu})_{i,i-j}$, so $A_{\mu} \in M_{L,sym}$. To show the opposite, assume $A_{\mu} \in M_{L,sym}$. Denote the reflection of $\mu$ with respect to $\frac{1}{2}$ by $\bar{\mu}$. Then, $\int_0^1 u^i \bar{\mu} (du) = \int_0^1 (1-u)^i \mu(du) = (A_{\mu})_{i0} = (A_{\mu})_{ii}$ where the last equality follows by the assumption $A_{\mu} \in M_{L,sym}$. Therefore, $\mathcal{M}^{-1}(\mu)=\mathcal{M}^{-1} (\bar{\mu})$ which implies $\mu=\bar{\mu}$.
\end{proof}

Define a subset of $\C^{\N_0}$ as follows :
\begin{displaymath}
D:=\{ \la=(\la_i)_{i \in \N_0} ; \la_{2i+1}=\frac{1}{2} \sum_{k=0}^{2i} \binom{2i+1}{k} (-1)^k\la_k, \forall i \in \N_0 \}.
\end{displaymath}

\begin{remark}
There is a natural bijection between $D$ and $\{ (\la_i)_{i \in 2\N_0} \}$.
\end{remark}

\begin{lemma}
For $\la \in C$, $\mathcal{M}(\la)$ is invariant under the reflection with respect to $\frac{1}{2}$ if and only if $\la \in D$.
\end{lemma}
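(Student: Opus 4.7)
My plan is to reduce the reflection-invariance of $\mu:=\mathcal{M}(\la)$ to an infinite linear system on $\la$, identify the odd-indexed equations with the defining relations of $D$, and then show that the even-indexed equations follow automatically from the odd ones. Writing $\bar\mu$ for the reflection of $\mu$, I would first compute $\int u^i\,d\bar\mu = \int(1-u)^i\,d\mu = \sum_{k=0}^{i}\binom{i}{k}(-1)^k\la_k$. By Hausdorff uniqueness (Proposition \ref{prop:Hausdorff}), $\mu=\bar\mu$ if and only if $\int u^i\,d\mu = \int u^i\,d\bar\mu$ for every $i$, equivalently
\[
R_i := \la_i - \sum_{k=0}^{i}\binom{i}{k}(-1)^k\la_k = 0 \qquad (i\in\N_0).
\]
Isolating the $k=i$ term gives $R_{2m+1}=2\la_{2m+1}-\sum_{k=0}^{2m}\binom{2m+1}{k}(-1)^k\la_k$ and $R_{2m}=-\sum_{k=0}^{2m-1}\binom{2m}{k}(-1)^k\la_k$, so the vanishing of $R_{2m+1}$ is exactly the condition defining $D$, while the vanishing of $R_{2m}$ is a further relation. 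The forward implication is therefore immediate.

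For the converse, assume $\la\in D$, i.e.\ $R_{2m+1}=0$ for every $m\ge 0$; the task is to derive $R_{2m}=0$. The signed measure $\nu:=\mu-\bar\mu$ is antisymmetric under $u\mapsto 1-u$: $\int f(1-u)\,d\nu = -\int f(u)\,d\nu$ for every continuous $f$, directly from the definition of $\bar\mu$. Choosing $f(u)=u^i$ and noting that $\int u^i\,d\nu = R_i$ yields the unconditional identity
\[
\sum_{k=0}^{i}\binom{i}{k}(-1)^k R_k = -R_i.
\]
For even $i=2m$ this rearranges to $2R_{2m} + \sum_{k=0}^{2m-1}\binom{2m}{k}(-1)^k R_k = 0$; under the hypothesis the odd-indexed $R_k$ vanish, leaving $2R_{2m} = -\sum_{j=0}^{m-1}\binom{2m}{2j}R_{2j}$. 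Since $R_0=0$ holds trivially, induction on $m$ forces $R_{2m}=0$ for all $m$. All $R_i$ therefore vanish, and Hausdorff uniqueness again yields $\mu=\bar\mu$.

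The step I expect to be least obvious is the antisymmetry identity for the $R_i$, which is what allows the odd-indexed conditions defining $D$ to propagate to the missing even-indexed conditions; without this structural observation the implication would reduce to an infinite family of combinatorial identities that would require a separate justification. Everything else in the argument is direct substitution, parity bookkeeping, and a one-line induction.
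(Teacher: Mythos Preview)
Your proof is correct and follows essentially the same route as the paper's: both reduce reflection-invariance to the system $\int u^i\,d\mu=\int(1-u)^i\,d\mu$, read off the odd-index equations as the defining relations of $D$, and then recover the even-index equations by an induction that expands $(1-u)^{2m}$ binomially. Your packaging via $R_i=\int u^i\,d\nu$ and the antisymmetry identity $\sum_k\binom{i}{k}(-1)^kR_k=-R_i$ is a slightly cleaner way to express the same induction step the paper carries out by hand.
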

\begin{proof}
If $\mathcal{M}(\la)$ is invariant under the reflection with respect to $\frac{1}{2}$, then 
\begin{align*}
\la_{2i+1} &= \int_0^1 u^{2i+1} \mathcal{M}(\la) (du) = \int_0^1 (1-u)^{2i+1} \mathcal{M}(\la) (du) \\
& =\sum_{k=0}^{2i+1}  \binom{2i+1}{k} (-1)^k \int_0^1 u^{k} \mathcal{M}(\la) (du) = \sum_{k=0}^{2i+1}  \binom{2i+1}{k} (-1)^k \la_k.
\end{align*}
Therefore, $\la \in D$. On the other hand, if $\la \in D$, then $\int_0^1 u^{2i+1} \mathcal{M}(\la) (du) = \int_0^1 (1-u)^{2i+1} \mathcal{M}(\la) (du)$ for $i \in \N_0$ as above. Moreover, it is obvious that $\int_0^1 u^{0} \mathcal{M}(\la) (du) = \int_0^1 (1-u)^{0} \mathcal{M}(\la) (du)$. Now, if we know that $\int_0^1 u^{k} \mathcal{M}(\la) (du) = \int_0^1 (1-u)^{k} \mathcal{M}(\la) (du)$ for all $0 \le k \le 2i-1$, then
\begin{align*}
& \int_0^1 (1-u)^{2i} \mathcal{M}(\la) (du)  = \sum_{k=0}^{2i-1} \binom{2i}{k} (-1)^k \int_0^1 u^k \mathcal{M}(\la) (du) + \int_0^1 u^{2i} \mathcal{M}(\la) (du) \\
 & = \sum_{k=0}^{2i-1} \binom{2i}{k} (-1)^k \int_0^1 (1-u)^k \mathcal{M}(\la) (du) + \int_0^1 u^{2i} \mathcal{M}(\la) (du) \\
  & =  \int_0^1 (1-(1-u))^{2i} \mathcal{M}(\la) (du) -  \int_0^1 (1-u)^{2i} \mathcal{M}(\la) (du)  + \int_0^1 u^{2i} \mathcal{M}(\la) (du). 
\end{align*}
Namely, $\int_0^1 u^{k} \mathcal{M}(\la) (du) = \int_0^1 (1-u)^{k} \mathcal{M}(\la) (du)$ holds for $k=2i$. Then, by the mathematical induction, $\int_0^1 u^{k} \mathcal{M}(\la) (du) = \int_0^1 (1-u)^{k} \mathcal{M}(\la) (du)$ holds for all $k \in \N_0$ which implies $\mathcal{M}(\la)$ is invariant under the reflection with respect to $\frac{1}{2}$.
\end{proof}

Then, next follows straightforwardly.

\begin{proposition}
For $X \in M_L(\infty,\C)$, the following conditions are equivalent:

(i) $X \in V_P(\infty) \cap M_{L,stoch} \cap M_{L,sym}$.

(ii) $X= A_{\mu}$ for some $\mu \in \mathcal{P}([0,1])$ which is invariant under the reflection with respect to $\frac{1}{2}$.

(iii) $X = \Pi(\la)$ for some $\la=(\la_i)_{i \in \N_0} \in C \cap D$. 

Moreover, if the above conditions are satisfied, then $X \in M^{*,weak}_{L,stoch} \cap M^{*,weak}_{L,sym}$ and the probability measure $\mu$ given in (ii) is unique and $\mathcal{M}(\la)=\mu$ where $\la$ is the sequence given in (iii).
\end{proposition}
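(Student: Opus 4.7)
The plan is to reduce the proposition to a direct combination of three results that have already been established: the analogous theorem for the purely stochastic case (identifying $V_P(\infty)\cap M_{L,stoch}$ with the images of $\mathcal{A}$ and of $\Pi$ restricted to $C$), the first lemma of this subsubsection which translates the symmetry of a matrix $A_\mu$ into reflection invariance of $\mu$ about $\frac{1}{2}$, and the second lemma which characterizes the $\la \in C$ whose associated measure $\mathcal{M}(\la)$ is reflection invariant as exactly those in the set $D$. Each step amounts to overlaying one extra condition on an already-known bijective correspondence, so the work is mostly bookkeeping.

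For (i) $\Rightarrow$ (ii), I would take $X \in V_P(\infty)\cap M_{L,stoch}$, apply the stochastic-case theorem to write $X=A_\mu$ for a unique $\mu \in \mathcal{P}([0,1])$ (uniqueness following from the bijectivity of $\mathcal{A}$ recorded in the remark after Proposition~\ref{prop:id2}), and then invoke the symmetry lemma: since $X \in M_{L,sym}$, the measure $\mu$ must be invariant under $u \mapsto 1-u$. The converse (ii) $\Rightarrow$ (i) runs the same lemmas in the other direction: $A_\mu \in V_P(\infty) \cap M_{L,stoch}$ by Theorem~\ref{thm:stoch}, and $A_\mu \in M_{L,sym}$ by the first lemma applied to a reflection-invariant $\mu$.

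For the equivalence (ii) $\iff$ (iii), I would use the bijection $\mathcal{M} : C \to \mathcal{P}([0,1])$ to encode $\mu$ as $\la = \mathcal{M}^{-1}(\mu) \in C$; the second lemma then states precisely that $\mu$ is reflection invariant about $\frac{1}{2}$ if and only if $\la \in D$, giving $\la \in C \cap D$. To identify $A_\mu$ with $\Pi(\la)$, I would note that Lemma~\ref{lem:Au} and the proof of Theorem~\ref{thm:stoch} show $P^{-1}A_\mu P$ is the diagonal matrix with entries $\int_0^1 u^i \mu(du) = \la_i$, so $A_\mu = P\,\mathrm{diag}(\la_0,\la_1,\ldots)\,P^{-1} = \Pi(\la)$; this simultaneously pins down $\mathcal{M}(\la) = \mu$ as asserted in the last sentence of the proposition, and reproves uniqueness of $\mu$.

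Finally, for the supplementary claim that $X \in M^{*,weak}_{L,stoch} \cap M^{*,weak}_{L,sym}$, I would observe that condition (i) places $X$ inside $V_P(\infty)$, so Proposition~\ref{prop:inj} immediately gives $X \in M^{*,weak}_L(\infty,\C)$; combined with the stochastic and symmetric properties the two intersections are automatic. There is no real obstacle in this proof—every ingredient is already in hand—so the only care needed is to make sure that the chain $(\mathrm{i}) \Leftrightarrow (\mathrm{ii}) \Leftrightarrow (\mathrm{iii})$ is invoked together with the correct uniqueness statement from $\mathcal{M}$, and that the explicit identification $A_\mu = \Pi(\la)$ is used to harmonize the $\la$ of (iii) with the $\mu$ of (ii).
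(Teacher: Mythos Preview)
Your proposal is correct and matches the paper's own treatment: the paper states only that the proposition ``follows straightforwardly'' from the two lemmas immediately preceding it together with the stochastic-case theorem, and what you have written is precisely the unpacking of that one-line claim. The chain (i)$\Leftrightarrow$(ii)$\Leftrightarrow$(iii) via the bijection $\mathcal{A}$, the symmetry lemma for $A_\mu$, and the characterization of reflection-invariant $\mathcal{M}(\la)$ by $\la\in D$ is exactly the intended argument, and your handling of uniqueness and of the identification $A_\mu=\Pi(\la)$ is the right bookkeeping.
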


\begin{theorem}
Let $\la=(\la_n)_{n \in \N_0} \in E$. Then, for $X \in M_L(\infty,\C)$, the following conditions are equivalent:

(i) $X \in M^{*,weak}_{L,stoch} \cap M^{*,weak}_{L,sym}$ and $\Phi(X)=\la$.

(ii) $\la \in C \cap D$ and $X= A_{\mu}$ where $\mu=\mathcal{M}(\la)$.

(iii) $\la \in C \cap D$ and $X= \Pi(\la)$.

Moreover, if $\la \in \tilde{E}$, then if the above conditions are satisfied, $X \in M^*_{L,stoch} \cap M^*_{L,sum}$.
\end{theorem}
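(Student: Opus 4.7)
The plan is to assemble this equivalence directly from three earlier building blocks: (a) the preceding Proposition, which characterizes $V_P(\infty)\cap M_{L,stoch}\cap M_{L,sym}$ as the common image of $C\cap D$ under $\mathcal{A}\circ\mathcal{M}$ and under $\Pi$; (b) Proposition \ref{prop:inj}, which embeds $V_P(\infty)$ into $M^{*,weak}_L(\infty,\C)$; and (c) the infinite-dimensional injectivity statement $\Phi\colon\Phi^{-1}(E)\to E$ proved just above. These already do most of the work.

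First, (ii)$\Leftrightarrow$(iii) is immediate from the preceding Proposition: for $\la\in C\cap D$ and $\mu=\mathcal{M}(\la)$ one has $A_\mu=\Pi(\la)$, so the two formulas for $X$ literally define the same matrix. For the implication (iii)$\Rightarrow$(i), suppose $\la\in C\cap D$ and $X=\Pi(\la)$. The preceding Proposition places $X$ in $V_P(\infty)\cap M_{L,stoch}\cap M_{L,sym}$, Proposition \ref{prop:inj} then upgrades $V_P(\infty)$-membership to $M^{*,weak}_L(\infty,\C)$-membership, so $X\in M^{*,weak}_{L,stoch}\cap M^{*,weak}_{L,sym}$, and $\Phi(X)=\Phi(\Pi(\la))=\la$ follows from triangularity of $P$.

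The hypothesis $\la\in E$ is used exclusively in (i)$\Rightarrow$(iii). Both $X$ and $\Pi(\la)$ lie in $M^{*,weak}_L(\infty,\C)$ and are sent to $\la$ by $\Phi$; the injectivity of $\Phi$ on $\Phi^{-1}(E)$ therefore forces $X=\Pi(\la)$, and in particular $X\in V_P(\infty)$. The stochastic and symmetric hypotheses on $X$ then let me invoke the preceding Proposition in the reverse direction to conclude $\la\in C\cap D$, which finishes the equivalence.

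For the moreover clause, if $\la\in\tilde{E}$ then the diagonal entries $(-1)^i\la_i$ of the putative diagonalization are pairwise distinct, so by the remark immediately following the definition of the (weak) anti-diagonal eigenvalue property, the weak property automatically upgrades to the full anti-diagonal eigenvalue property on every leading principal submatrix of $X$. Therefore $X\in M^*_L(\infty,\C)$, and combined with stochasticity and symmetry this gives $X\in M^*_{L,stoch}\cap M^*_{L,sym}$. I do not anticipate any serious obstacle; the proof is essentially a bookkeeping exercise that splices together the symmetric-stochastic Proposition with the general injectivity result on $E$, and the only care required is to keep straight which hypothesis ($\la\in E$ versus $\la\in\tilde{E}$) supplies which conclusion.
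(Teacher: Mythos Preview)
Your proposal is correct and follows essentially the same approach the paper intends: the paper states this theorem without proof, treating it as a straightforward consequence of the preceding Proposition (characterizing $V_P(\infty)\cap M_{L,stoch}\cap M_{L,sym}$) together with the injectivity of $\Phi$ on $\Phi^{-1}(E)$, exactly as you have organized it. Your bookkeeping is accurate, including the use of $M^{*,weak}_{L,stoch}=M^{*,weak}_L\cap M_{L,stoch}$ (and similarly for the symmetric case) to extract stochasticity and symmetry in the (i)$\Rightarrow$(iii) step.
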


\subsubsection{Symmetric case}

Finally, we consider the symmetric case. In this subsection, we first study the finite dimensional case.

Let $D(n):=\{ (\la_0,\la_1,\dots,\la_n) ;  \la_{2i+1}=\frac{1}{2} \sum_{k=0}^{2i} \binom{2i+1}{k} (-1)^k\la_k, \forall i \le \frac{n-1}{2} \}$.

\begin{lemma}\label{lem:sym}
Let $B(u)=\frac{1}{2}(A(u)+A(1-u))$. Then, $B(u) \in V_P(n+1) \cap M_{L,sym}(n+1,\C)$. In particular, if $n$ is odd, then the set $\{ B(u_i), i=0,1,\dots, \frac{n-1}{2} \} $ forms a basis of $V_P(n+1,\C) \cap M_{L,sym}(n+1,\C)$ whenever $|\{u_i,1-u_i; i=0,1,\dots, \frac{n-1}{2} \}|=n+1$. Also, if $n$ is even, then the set $\{ B(u_i), i=0,1,\dots, \frac{n-2}{2} \} \cup \{B(\frac{1}{2})\} $ forms a basis of $V_P(n+1,\C) \cap M_{L,sym}(n+1,\C)$ whenever $|\{u_i,1-u_i; i=0,1,\dots, \frac{n-2}{2} \}|=n$.
\end{lemma}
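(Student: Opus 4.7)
The plan is three-fold: verify $B(u)\in V_P(n)\cap M_{L,sym}(n+1,\C)$, bound the dimension of this intersection from above, and prove linear independence of the proposed bases via a Vandermonde argument after a clever substitution.

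\emph{Membership.} Since $V_P(n)$ is a linear subspace of $M(n+1,\C)$ and $A(u), A(1-u)\in V_P(n)$ by Lemma \ref{lem:Au}, linearity yields $B(u)\in V_P(n)$ with
\[
P^{-1}B(u)P=\mathrm{diag}\!\Bigl(\tfrac{u^k+(1-u)^k}{2}\Bigr)_{k=0}^{n}.
\]
Symmetry is immediate from the direct identity $A(u)_{i,i-j}=\binom{i}{j}u^{i-j}(1-u)^j=A(1-u)_{ij}$ for $i\ge j$: averaging $A(u)$ and $A(1-u)$ makes the swap $j\leftrightarrow i-j$ trivial, so $B(u)_{i,i-j}=B(u)_{ij}$.

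\emph{Dimension bound.} I would next show that the projection $\psi\colon X\mapsto (x_{00},x_{22},x_{44},\dots)$ onto the even-indexed diagonal entries is injective on $V_P(n)\cap M_{L,sym}(n+1,\C)$. Writing $X=P\,\mathrm{diag}(\lambda_0,\dots,\lambda_n)\,P^{-1}\in V_P(n)$, Lemma \ref{lem:differenceop} combined with the single symmetry condition $x_{2\ell+1,0}=x_{2\ell+1,2\ell+1}$ (and a brief computation) forces
\[
\lambda_{2\ell+1}=\tfrac12\sum_{k=0}^{2\ell}\binom{2\ell+1}{k}(-1)^k\lambda_k,\qquad 2\ell+1\le n,
\]
so each odd-indexed $\lambda$ is determined inductively by the earlier ones; in particular it vanishes if all even-indexed $\lambda_{2j}$ do. Thus $\psi$ is injective, giving $\dim\bigl(V_P(n)\cap M_{L,sym}(n+1,\C)\bigr)\le\lceil(n+1)/2\rceil$, which matches the cardinalities of the proposed bases.

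\emph{Linear independence via Vandermonde.} It then suffices to show the specified $B(u_i)$'s (together with $B(\tfrac12)$ in the even case) are linearly independent. Since $P^{-1}(\cdot)P$ and $\psi$ are both injective, this reduces to non-singularity of the square matrix
\[
M=\Bigl(\tfrac{u_i^{2j}+(1-u_i)^{2j}}{2}\Bigr)_{i,\,j}
\]
of size $\lceil(n+1)/2\rceil$. The key substitution is $s:=u(1-u)$: via the recursion $a_{k+1}=a_k-s\,a_{k-1}$ with $a_0=2$, $a_1=1$, each $u^{2j}+(1-u)^{2j}$ is a polynomial in $s$ of exact degree $j$. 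Hence $M$ decomposes as a Vandermonde matrix $(s_i^j)$ times an upper triangular matrix with nonzero diagonal, so $\det M\ne 0$ iff the $s_i:=u_i(1-u_i)$ are pairwise distinct. Since $s(u)=s(v)$ iff $v\in\{u,1-u\}$, the hypothesis on $|\{u_i,1-u_i\}|$ is exactly this distinctness condition; in the even case the extra point $s(\tfrac12)=\tfrac14$ is automatically distinct from the $s(u_i)$ because the hypothesis forces $u_i\ne\tfrac12$. The main subtle step is the dimension upper bound through the injectivity of $\psi$; once this is in place, the substitution $s=u(1-u)$ and the standard Vandermonde calculation close the argument.
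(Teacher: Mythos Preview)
Your proof is correct, and the dimension upper bound via the odd-index recursion for $\lambda_{2\ell+1}$ is exactly the argument the paper gives (phrased there as $\Phi(X)\in D(n)$). The difference lies in the linear-independence step. The paper observes directly that a relation $\sum_i c_iB(u_i)=0$ reads $\sum_i c_i\bigl(A(u_i)+A(1-u_i)\bigr)=0$, and since Lemma~\ref{lem:Au} already says $\{A(v)\}$ is linearly independent for distinct $v$, the hypothesis $|\{u_i,1-u_i\}|=n+1$ (resp.\ $n$ together with $\tfrac12$) immediately forces all $c_i=0$. Your route---passing to the even-indexed moments, substituting $s=u(1-u)$, and recognizing the resulting matrix as a Vandermonde times a unitriangular factor---is a genuinely different computation: it is more explicit and self-contained (never invoking the basis property of the $A(u)$'s), and it makes transparent \emph{why} the natural parameter is $s=u(1-u)$, at the cost of a bit more machinery than the paper's one-line lift to the $A$'s.
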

\begin{proof}
Since $A(u), A(1-u) \in V_P(n+1)$ and $V_P(n+1)$ is a linear space, $B(u) \in V_P(n+1)$. Also, if $i \ge j$, then $B(u)_{ij}=\frac{1}{2}A(u)_{ij} + \frac{1}{2}A(1-u)_{ij}= \binom{i}{j}  \frac{1}{2}(u^j(1-u)^{i-j}+(1-u)^j u^{i-j})=  \binom{i}{i-j}\frac{1}{2}( u^{i-j} (1-u)^j + (1-u)^{i-j} u^j) =\frac{1}{2} A(u)_{i,i-j} + \frac{1}{2}A(1-u)_{i,i-j} =B(u)_{i,i-j}$. Therefore, $B(u) \in V_P(n+1) \cap M_{L,sym}(n+1,\C)$. 

Next, we will show that the dimension of the linear space $V_P(n+1,\C) \cap M_{L,sym}(n+1,\C)$ is $[\frac{n+2}{2}]$. Since $\{A(v_i),i=0,1,\dots,n\}$ is linearly independent for any distinct $v_0,v_1,\dots,v_n$, if $n$ is odd, then $\{ B(u_i), i=0,1,\dots, \frac{n-1}{2} \} $ is linearly independent whenever $|\{u_i,1-u_i; i=0,1,\dots, \frac{n-1}{2} \}|=n+1$. Namely, $\mathrm{dim}(V_P(n+1,\C) \cap M_{L,sym}(n+1,\C)) \ge  [\frac{n+2}{2}]$. For the same reason, $\mathrm{dim}(V_P(n+1,\C) \cap M_{L,sym}(n+1,\C)) \ge  [\frac{n+2}{2}]$ holds also for even $n$. Now, we only need to show that $\mathrm{dim}(V_P(n+1,\C) \cap M_{L,sym}(n+1,\C)) \le  [\frac{n+2}{2}]$. Here, we see that if $X \in V_P(n+1) \cap M_{L,sym}(n+1,\C)$, then $\Phi(X) \in D(n)$ since
\begin{align*}
\Phi(X)_{2i+1}=x_{2i+1,2i+1}= x_{2i+1,0}= \sum_{k=0}^{2i+1} P_{2i+1,k}x_{kk}P^{-1}_{k,0}= \sum_{k=0}^{2i+1} \binom{2i+1}{k} \Phi(X)_k (-1)^k.
\end{align*} 
As $\Phi :  V_P(n+1) \cap M_{L,sym}(n+1,\C) \to D(n)$ is an injective linear map and the dimension of $D(n)$ is $[\frac{n+2}{2}]$, we complete the proof.
\end{proof}
From the result, we have a simple corollary.
\begin{corollary}
$\Phi :  V_P(n+1) \cap M_{L,sym}(n+1,\C) \to D(n)$ is an isomorphism and its inverse is $\Pi$.
\end{corollary}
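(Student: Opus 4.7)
The plan is to leverage Lemma~\ref{lem:sym} directly: it has already established that $\Phi$ restricted to $V_P(n+1) \cap M_{L,sym}(n+1,\C)$ is a linear injection into $D(n)$, and that the dimension of the domain equals $[\frac{n+2}{2}]$. So the first step is simply to observe that $D(n)$ also has dimension $[\frac{n+2}{2}]$: indeed, every element of $D(n)$ is determined by its even-indexed components $(\la_0, \la_2, \ldots, \la_{2\lfloor n/2 \rfloor})$, since each odd-indexed $\la_{2i+1}$ is given as an explicit linear combination of the preceding entries. An injective linear map between vector spaces of the same finite dimension is an isomorphism, so $\Phi$ is bijective.

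The second step is to identify the inverse with $\Pi$. Since $P$ and $P^{-1}$ are lower triangular with $1$'s on the diagonal, the diagonal entries of $PDP^{-1}$ coincide with the diagonal entries of $D$, giving $\Phi \circ \Pi = \mathrm{Id}$ on all of $\C^{n+1}$. Conversely, for $X \in V_P(n+1)$, by definition $P^{-1}XP$ is diagonal, and so $X = P \cdot \mathrm{diag}(\Phi(X)) \cdot P^{-1} = \Pi(\Phi(X))$. Combining these two relations with the bijectivity established above yields $\Pi(D(n)) = V_P(n+1) \cap M_{L,sym}(n+1,\C)$ and identifies $\Pi|_{D(n)}$ as the two-sided inverse of $\Phi$.

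There is no real obstacle here: the corollary is essentially a packaging of Lemma~\ref{lem:sym} together with the elementary fact that $\Pi$ is a right inverse of $\Phi$ by triangularity of $P$. The only minor check is making sure that $\Pi(\la)$ lands in $M_{L,sym}(n+1,\C)$ when $\la \in D(n)$, which does not need a separate symmetry computation: it follows from surjectivity of $\Phi$, since every $\la \in D(n)$ has a preimage $X$ in $V_P(n+1) \cap M_{L,sym}(n+1,\C)$, and then $\Pi(\la) = \Pi(\Phi(X)) = X$ automatically lies in the intersection.
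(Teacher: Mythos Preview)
Your proof is correct and is essentially the approach the paper intends: the paper states the corollary as an immediate consequence of Lemma~\ref{lem:sym} without giving a separate argument, and your write-up simply spells out the dimension count and the identities $\Phi\circ\Pi=\mathrm{Id}$, $\Pi\circ\Phi|_{V_P}=\mathrm{Id}$ already recorded earlier in the paper. There is nothing to add or correct.
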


\begin{theorem}
For $X \in M_L(\infty,\C)$, the following conditions are equivalent:

(i) $X \in V_P(\infty) \cap M_{L,sym}$.

(ii) $X = \Pi(\la)$ for some $\la=(\la_i)_{i \in \N_0} \in D$. 

Moreover, if the above conditions are satisfied, then $X \in M^{*,weak}_{L,sym}$.
\end{theorem}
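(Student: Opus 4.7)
The plan is to bootstrap the finite-dimensional isomorphism $\Phi : V_P(n+1) \cap M_{L,sym}(n+1,\C) \to D(n)$ obtained in the preceding corollary to the infinite setting by restricting $X$ to its leading principal submatrices and using the fact that the definitions of $V_P(\infty)$, $M_{L,sym}(\infty,\C)$, and $D$ are all level-wise compatible.

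For (i) $\Rightarrow$ (ii), assume $X \in V_P(\infty) \cap M_{L,sym}$. For each $n$, the restriction $X|_n$ lies in $V_P(n+1) \cap M_{L,sym}(n+1,\C)$; symmetry is preserved under truncation because the condition $x_{i,i-j}=x_{i,j}$ only involves row $i$, and membership in $V_P(n+1)$ is built into the definition of $V_P(\infty)$. The corollary then yields $X|_n = \Pi(\Phi(X|_n))$ with $\Phi(X|_n) \in D(n)$. Taking the union over all $n$ gives $\Phi(X) \in D$ and $X = \Pi(\Phi(X))$, so $\la := \Phi(X)$ is the desired sequence.

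For (ii) $\Rightarrow$ (i), suppose $X = \Pi(\la)$ for some $\la \in D$. Then $P^{-1}X|_n P$ is diagonal for every $n$ (since $P$ is triangular and $(\Pi(\la))|_n = P|_n \operatorname{diag}(\la_0,\dots,\la_n) P^{-1}|_n$), giving $X \in V_P(\infty)$. Moreover, $\la|_n := (\la_0,\dots,\la_n) \in D(n)$ because the defining relation of $D$ restricted to indices $\le n$ is exactly the defining relation of $D(n)$. By the corollary, $X|_n = \Pi(\la|_n) \in M_{L,sym}(n+1,\C)$ for all $n$, and since symmetry is a row-wise condition, this implies $X \in M_{L,sym}$.

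The ``Moreover'' clause is essentially free: Proposition \ref{prop:inj} gives $V_P(\infty) \subset M^{*,weak}_L(\infty,\C)$, and intersecting with $M_{L,sym}$ places $X$ in $M^{*,weak}_{L,sym}$ by definition. I do not anticipate any real obstacle here; the only thing to be slightly careful about is the compatibility of truncation with the defining relations of $V_P(\infty)$ and $D$, which is immediate but worth stating explicitly, and the fact that $\Pi$ commutes with truncation to the $n$-th block, which uses the triangularity of $P$.
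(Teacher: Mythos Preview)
Your proposal is correct and follows essentially the same approach as the paper: both directions are reduced to the finite-dimensional case via truncation, invoking the result that $\Phi : V_P(n+1) \cap M_{L,sym}(n+1,\C) \to D(n)$ is an isomorphism with inverse $\Pi$ (the paper cites the proof of Lemma~\ref{lem:sym} directly, you cite the corollary derived from it), and the ``Moreover'' clause comes from Proposition~\ref{prop:inj}. Your write-up is more explicit than the paper's about the compatibility of truncation with $\Pi$, $V_P(\infty)$, and $D$, but the underlying argument is the same.
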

\begin{proof}
First assume (i) holds. Then, we only need to show that $\Phi(X) \in D$ and it follows from the proof of Lemma \ref{lem:sym}.
Next, assume (ii) holds. Then, by the definition of $\Pi$, $X|n=\Pi ((\la_0,\la_1,\dots,\la_n)) \in V_P(n+1) \cap M_{L,sym}(n+1,\C)$ for any $n$. Namely, $X \in V_P(\infty) \cap M_{L,sym}$.
\end{proof}

\begin{theorem}
Let $\la=(\la_n)_{n \in \N_0} \in E$. Then, for $X \in M_L(\infty,\C)$, the following conditions are equivalent:

(i) $X \in M^{*,weak}_{L,sym}$ and $\Phi(X)=\la$.

(ii) $\la \in D$ and $X= \Pi(\la)$.

Moreover, if $\la \in \tilde{E}$ and the above conditions are satisfied, then $X \in M^*_{L,sym}$.
\end{theorem}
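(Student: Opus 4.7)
The plan is to prove the two directions of the equivalence separately by piggy-backing on the immediately preceding theorem (which characterizes $V_P(\infty) \cap M_{L,sym}$ as $\Pi(D)$) together with the injectivity of $\Phi$ on $\Phi^{-1}(E)$ established earlier in the paper.

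For the implication (ii) $\Rightarrow$ (i), assume $\la \in D$ and $X = \Pi(\la) = PDP^{-1}$. Then $X$ manifestly lies in $V_P(\infty)$, hence in $M^{*,weak}_L(\infty,\C)$ by Proposition \ref{prop:inj}. Applying the preceding theorem, since $\la \in D$, we get $X \in M_{L,sym}$ as well, so $X \in M^{*,weak}_{L,sym}$. Finally, $\Phi(X) = \Phi \circ \Pi(\la) = \la$ since $\Phi \circ \Pi = \mathrm{Id}$.

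For (i) $\Rightarrow$ (ii), suppose $X \in M^{*,weak}_{L,sym}$ with $\Phi(X) = \la \in E$. Consider the candidate $Y := \Pi(\la)$; it lies in $M^{*,weak}_L(\infty,\C)$ by Proposition \ref{prop:inj}, and satisfies $\Phi(Y) = \la$. By the injectivity of $\Phi$ on $\Phi^{-1}(E)$ (the infinite-dimensional version of Proposition \ref{prop:inj2}), we conclude $X = Y = \Pi(\la)$, so $X \in V_P(\infty)$. Combining with the symmetry $X \in M_{L,sym}$ and invoking the preceding theorem (which states $V_P(\infty) \cap M_{L,sym} = \Pi(D)$), we obtain $\la = \Phi(X) \in D$, as required.

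The "moreover" clause is a direct consequence of the general remark made early in the paper: if $X \in M^{*,weak}_L$ satisfies the separation condition $(-1)^i\la_i \neq (-1)^j\la_j$ for $i \neq j$ (which is exactly the condition defining $\tilde E$ relative to $E$), then the weak anti-diagonal eigenvalue property automatically upgrades to the anti-diagonal eigenvalue property, applied to every finite leading principal submatrix. Thus $X \in M^*_L \cap M_{L,sym} = M^*_{L,sym}$.

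I do not foresee a substantive obstacle here; the work has already been done in the preceding theorem (symmetric case characterization of $V_P(\infty)$) and in Proposition \ref{prop:inj2} (injectivity on $\Phi^{-1}(E)$). The only thing to be careful about is that the previous symmetric theorem gives $X \in V_P(\infty)$ directly from $X = \Pi(\la)$, so we do not need to reprove any combinatorial identity. The argument is essentially just a bookkeeping exercise in chaining together the earlier propositions.
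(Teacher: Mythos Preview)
Your proof is correct and follows exactly the approach the paper intends: the paper states this theorem without an explicit proof, leaving it as an immediate consequence of the preceding symmetric characterization theorem ($V_P(\infty)\cap M_{L,sym}=\Pi(D)$) together with the injectivity of $\Phi$ on $\Phi^{-1}(E)$, which is precisely how you argue.
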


\section{Interacting particle process}

In this section, we consider a special matrix obtained from an invariant probability measure of interacting particle systems.

The interacting particle system is a Markov process describing a dynamics of \lq\lq particles" moving on discrete sites. Its state space is $\N_0^L$ where $L$ is the number of sites. An element $\eta=(\eta_x)_{1 \le x \le L} \in \N_0^L$ represents the configuration of particles, namely $\eta_x$ represents the number of particles at a site $x$.

We are interested in a conservative interacting particle system. Precisely, we consider a Markov process where the sum of the numbers of particles $\sum_{x=1}^L\eta_x$ is conserved. In other words, the configuration space $\Sigma_{L,n}:=\{ \eta \in \N_0^L; \sum_{x=1}^L \eta_x =n\}$ is invariant under our dynamics. In particular, we consider a process having product invariant measure. Let $\nu$ be a probability measure on $\N_0$ and $\nu^{\otimes L}$ be the product of $\nu$ on $\N_0^L$. We assume that $\nu(i) > 0$ for all $i \in \N_0$ throughout the paper. Then, if the process is conservative and $\nu^{\otimes L}$ is an invariant measure, then $\nu_{L,n}=\nu^{\otimes L}|_{\Sigma_{L,n}}$ is also an invariant measure. 

We also assume that our dynamics on $\Sigma_{L,n}$ is ergodic. Then the distribution of the process at time $t$ converges to $\nu_{L,n}$ as $t \to \infty$ independently of the initial distribution. The speed of this convergence is one of the main interest in the study of Markov processes and also is important when we study the scaling limit of our Markov process. This speed is estimated by the spectral gap, which is the smallest non-zero eigenvalue of a certain operator defined by the generator of the process. The spectral gap generally depends on $L$ and $n$. 

A sharp estimate of the spectral gap in terms of $L$ and $n$ is essential in the study of the hydrodynamic limit, which is one of the important scaling limits motivated by the rigorous study of statistical mechanics. So, there are many works on it (Cf. \cite{KL,LSV,M,NS}). Recently, Caputo \cite{C} introduced a new and elementary method, and Sasada \cite{S} showed that the method is applicable to a fairly general class of models. The key step of this method is to give a sharp estimate of the spectral gap for $L=3$ case, and for this purpose, we need to estimate the eigenvalues of the following matrices $R_n= (r^{(n)}_{ij})_{i,j=0}^n \in M(n+1,\R)$ where
\begin{equation}
r^{(n)}_{ij}:=\nu_{3,n}(\eta_2=j | \eta_1=i)=\frac{\nu_{3,n}(\eta_1=i, \eta_2=j)}{\nu_{3,n}(\eta_1=i)} = \nu_{2,n-i}(\eta_2=j).
\end{equation}
Then, it is easy to see that
\begin{align*}
r^{(n)}_{ij}=0 \quad \text{if} \  i+j >n, \quad 
\sum_{j=0}^n r^{(n)}_{ij}=1 \quad \forall \ i, \quad
r^{(n)}_{ij}=r^{(n)}_{i, n-i-j} \quad \forall \ i+j \le n . 
\end{align*}
Actually, $R_n$ is the upper anti-triangular matrix. On the eigenvalues of $R_n$, we have a simple lemma.
\begin{lemma}
For any $n$, all eigenvalues of $R_n$ are real. Moreover, since $R_n$ is an irreducible stochastic matrix, $1$ is an eigenvalue of $R_n$ and any other eigenvalue $\la$ is smaller than or equal to $1$ in absolute value, $|\la|\le 1$. 
\end{lemma}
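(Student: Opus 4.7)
The plan is to realize $R_n$ as the transition matrix of a reversible Markov chain on $\{0,1,\dots,n\}$, from which reality of the spectrum follows by standard spectral theory; the second assertion of the lemma is then classical Perron--Frobenius. The candidate reversing distribution is
\[
\pi_i := \nu_{3,n}(\eta_1=i),\qquad i=0,1,\dots,n,
\]
which is strictly positive because $\nu(i)>0$ for every $i\in\N_0$.

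The key step is the algebraic identity
\[
\pi_i\, r^{(n)}_{ij} \;=\; \nu_{3,n}(\eta_1=i,\eta_2=j) \;=\; \frac{\nu(i)\nu(j)\nu(n-i-j)}{W_n},
\]
valid for $i+j\le n$ (and both sides vanish otherwise), where $W_n:=\sum_{a+b+c=n}\nu(a)\nu(b)\nu(c)$; this uses that $\nu_{3,n}$ is $\nu^{\otimes 3}$ conditioned on $\Sigma_{3,n}$. The right-hand side is manifestly symmetric in $i$ and $j$, so detailed balance $\pi_i r^{(n)}_{ij}=\pi_j r^{(n)}_{ji}$ holds. Consequently $R_n$ is self-adjoint on $\R^{n+1}$ with respect to the weighted inner product $\langle f,g\rangle_\pi := \sum_i \pi_i f_i g_i$, hence diagonalizable with real eigenvalues.

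For the remaining assertions, stochasticity gives $R_n\mathbf{1}=\mathbf{1}$ so $1$ is an eigenvalue; the Gershgorin disk theorem applied to a row-stochastic nonnegative matrix gives $|\lambda|\le 1$ for every eigenvalue; and irreducibility (immediate from $r^{(n)}_{i,0}>0$ and $r^{(n)}_{0,j}>0$ for all $0\le i,j\le n$, so any two states communicate in at most two steps) upgrades this to simplicity of the Perron eigenvalue.

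I do not expect a serious obstacle: the only genuinely non-routine step is recognizing that the product-measure structure of $\nu^{\otimes 3}$ forces detailed balance for $R_n$; once the displayed identity is written down, the $i\leftrightarrow j$ symmetry is transparent and the remainder is textbook. A benefit of this route is that it works for every admissible $\nu$, whereas attempting to invoke the structural theorems of Section~2 directly would require checking that $G^{-1}R_n\in V_P(n+1)$, which is not evident for a generic $\nu$.
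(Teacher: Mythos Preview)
Your proposal is correct and follows essentially the same route as the paper: both establish that $R_n$ is self-adjoint on the $\pi$-weighted $\ell^2$-space with $\pi_i=\nu_{3,n}(\eta_1=i)$, and both derive the remaining assertions from Perron--Frobenius. The only difference is cosmetic---the paper phrases self-adjointness via $R_nf(\eta_1)=E_{\nu_{3,n}}[f(\eta_2)\mid\eta_1]$ and the exchangeability of $(\eta_1,\eta_2)$ under $\nu_{3,n}$, whereas you unpack this into the explicit product formula $\pi_i r^{(n)}_{ij}=\nu(i)\nu(j)\nu(n-i-j)/W_n$ and read off detailed balance directly; these are the same symmetry viewed two ways, and your added justification of irreducibility is a detail the paper leaves implicit.
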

\begin{proof}
The latter part of the claim follows from the Perron-Frobenius theorem. The former part follows from the fact $R_n$ is a self-adjoint operator on $L^2(\pi_1 \circ \nu_{3,n})$ where $\pi_1 : \Sigma_{3,n} \to \{0,1,\dots,n\}$ is $\pi_1(\eta)=\eta_1$ and $R_n : L^2(\pi_1 \circ \nu_{3,n}) \to L^2(\pi_1 \circ \nu_{3,n})$ is defined as $R_nf (j)=\sum_{k=0}^n r^{(n)}_{jk}f(k)$ for $f : \{0,1,\dots,n\} \to \R$. We have
\begin{equation*}
E_{\nu_{3,n}}[(R_nf) (\eta_1) g(\eta_1)]= E_{\nu_{3,n}}[f (\eta_1) (R_ng)(\eta_1)]
\end{equation*}
for any $f$ and $g$ since $R_nf (\eta_1)=E_{\nu_{3,n}}[f(\eta_2) | \eta_1]$, therefore $R_n$ is self-adjoint.
\end{proof}

From the observation of \cite{S}, we need to find a sufficient condition on $\nu$ for the following hold:
\begin{align}
& \inf_n \min \{ \text{eigenvalues of} \ R_n\} > -1, \label{eq:conditioninf}  \\
&  \sup_n \max (\{ \text{eigenvalues of} \ R_n \} \setminus \{1\}) < \frac{1}{2}. \label{eq:conditionsup}
\end{align}

\begin{lemma}
For any $\nu$ satisfying $\nu(i) >0$ for all $i \in \N_0$, $\inf_n \min \{ \text{eigenvalues of} \ R_n\} \ge -\frac{1}{2}$. In particular, (\ref{eq:conditioninf}) holds.
\end{lemma}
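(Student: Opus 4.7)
The plan is to exploit the three-fold symmetry of $\nu_{3,n}$ under permutations of the three sites in order to derive an operator inequality for $R_n$, from which the bound on the spectrum will follow by a one-variable quadratic inequality.

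More precisely, I would start from the observation that $R_n$ is the conditional expectation operator $R_nf(\eta_1)=E_{\nu_{3,n}}[f(\eta_2)\mid\eta_1]$, and that by the exchangeability of $(\eta_1,\eta_2,\eta_3)$ under $\nu_{3,n}$ we also have $E_{\nu_{3,n}}[f(\eta_3)\mid\eta_1]=R_nf(\eta_1)$. Given $f:\{0,1,\dots,n\}\to\R$, define $g(\eta):=f(\eta_1)+f(\eta_2)+f(\eta_3)$. Then linearity of conditional expectation yields
\begin{equation*}
E_{\nu_{3,n}}[g\mid\eta_1]=f(\eta_1)+2R_nf(\eta_1),
\end{equation*}
and Jensen's inequality (equivalently, the fact that conditional expectation is an $L^2$-contraction) gives $\|E_{\nu_{3,n}}[g\mid\eta_1]\|^2\le\|g\|^2_{L^2(\nu_{3,n})}$, where $\|\cdot\|$ without subscript is the norm on $L^2(\pi_1\circ\nu_{3,n})$.

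Next I would expand both sides. Using exchangeability and the self-adjointness of $R_n$ established in the previous lemma, the right-hand side becomes
\begin{equation*}
\|g\|^2_{L^2(\nu_{3,n})}=3\|f\|^2+6\langle f,R_nf\rangle,
\end{equation*}
while the left-hand side is $\|f+2R_nf\|^2=\|f\|^2+4\langle f,R_nf\rangle+4\langle f,R_n^2f\rangle$. Subtracting yields the operator inequality
\begin{equation*}
2R_n^2\le I+R_n
\end{equation*}
on $L^2(\pi_1\circ\nu_{3,n})$. If $\lambda$ is any eigenvalue of $R_n$ (real by the previous lemma), then $2\lambda^2-\lambda-1\le0$, i.e.\ $(2\lambda+1)(\lambda-1)\le0$, which forces $-\tfrac12\le\lambda\le1$. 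In particular $\min\{\text{eigenvalues of }R_n\}\ge-\tfrac12$ uniformly in $n$, proving the claim and hence \eqref{eq:conditioninf}.

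I do not anticipate a real obstacle: the key observation is simply to use all three sites simultaneously (not just the pair $(\eta_1,\eta_2)$) so that $f(\eta_1)$ appears with coefficient $1$ and $R_nf(\eta_1)$ with coefficient $2$, which is exactly what produces the factor giving the sharp bound $-\tfrac12$. The only minor care needed is in verifying that the reference measures match (the $L^2$ norm of $f(\eta_i)$ under $\nu_{3,n}$ equals $\|f\|^2$ for each $i=1,2,3$ by exchangeability), which is immediate from the definition of $\pi_1\circ\nu_{3,n}$ as the marginal of $\nu_{3,n}$.
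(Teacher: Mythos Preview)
Your argument is correct. Both your proof and the paper's rest on the same idea: exploit the full three-site exchangeability by forming $g(\eta)=f(\eta_1)+f(\eta_2)+f(\eta_3)$. The difference is in which nonnegative quantity is invoked. The paper simply uses $E_{\nu_{3,n}}[g^2]\ge 0$: expanding by exchangeability gives
\[
\|f\|^2+2\langle f,R_nf\rangle=\tfrac{1}{3}E_{\nu_{3,n}}\big[(\textstyle\sum_i f(\eta_i))^2\big]\ge 0,
\]
so that $I+2R_n\ge 0$ directly, with no quadratic factoring needed. You instead use the $L^2$-contraction of conditional expectation, $\|E[g\mid\eta_1]\|^2\le\|g\|^2$, which after expansion yields the stronger-looking inequality $2R_n^2\le I+R_n$ and hence $(2\lambda+1)(\lambda-1)\le 0$. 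Your route is slightly longer but has the pleasant side effect of recovering the Perron--Frobenius upper bound $\lambda\le 1$ in the same stroke; the paper's route is shorter because it avoids the $R_n^2$ term and the factorisation altogether.
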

\begin{proof}
For any $f : \N_0 \to \R$, since $\nu_{3,n}$ is invariant under the exchange of sites, we have 
\begin{align*}
& E_{\nu_{3,n}}[f (\eta_1) f(\eta_1)] + 2 E_{\nu_{3,n}}[f (\eta_1) (R_nf) (\eta_1)] = E_{\nu_{3,n}}[f (\eta_1) f(\eta_1)] + 2 E_{\nu_{3,n}}[f (\eta_1) f (\eta_2)] \\
& = \frac{1}{3} \sum_{i=1}^3  \sum_{j=1}^3  E_{\nu_{3,n}}[f (\eta_i) f(\eta_j)] = \frac{1}{3}  E_{\nu_{3,n}}[(\sum_{i=1}^3f(\eta_i))^2] \ge 0.
\end{align*}
Namely, $\min \{ \text{eigenvalues of} \ R_n\} \ge -\frac{1}{2}$.
\end{proof}

It is not simple to find a sufficient condition for (\ref{eq:conditionsup}). So, we use the results in the preceding section.

Define $\tilde{R}_{\nu}= (\tilde{r}_{ij}) \in M(\infty,\C)$ as 
\begin{equation}\label{eq:r}
\tilde{r}_{ij}=\nu_{2,i}(\eta_1=j)=r^{(n)}_{n-i,j},
\end{equation}
then
\begin{align*}
\tilde{r}_{ij}=0 \quad \text{if} \  i < j, \quad 
\sum_{j=0}^{\infty} \tilde{r}_{ij}=1 \quad \forall \ i, \quad
\tilde{r}_{ij}=\tilde{r}_{i,i-j} \quad \forall \ i \ge j. 
\end{align*}
Namely, $\tilde{R}_{\nu} \in M_{L,stoch} \cap M_{L,sym}$. If $\tilde{R}_{\nu}$ has global weak anti-diagonal eigenvalue property, then the eigenvalues of $R_n=G \tilde{R}|_n$ are $((-1)^i \tilde{r}_{ii})$, and hence 
\begin{equation}\label{eq:conditiontilder}
\sup_n  (\max \{ \text{eigenvalues of} \ R_n\} \setminus \{1\}) = \sup_i \{ (-1)^i \tilde{r}_{ii} ; i \ge 2 \}.
\end{equation}

Therefore, our goal is to characterize the probability measures $\nu$ such that the associated $\tilde{R}_{\nu}$ has global weak anti-diagonal eigenvalue property and satisfies (\ref{eq:conditiontilder}). By Proposition \ref{prop:inj}, $\tilde{R}_{\nu} \in V_P(\infty)$ is a sufficient condition for the former property.



Now, we introduce some notations. For any given function $g:\N  \to (0,\infty)$, let 
\begin{equation}
Z_i=\sum_{j=0}^i \binom{g(i)}{g(j)}
\end{equation}
and a matrix $A_g \in M(\infty,\C)$ be
\begin{equation}
(A_{g})_{ij}=\frac{1}{Z_i} \binom{g(i)}{g(j)} \mathbf{1}_{\{ i \ge j\}}
\end{equation}
where $\binom{g(i)}{g(j)}=\frac{g(i)!}{g(j)!g(i-j)!}$, $g(i)!=g(i)g(i-1) \cdots g(1)$ for $i \in \N$ and $g(0)!=1$.
Note that these notations are different from the usual binomial coefficients or factorial.

Since we assume that $\nu(i) >0$ for all $i \in \N_0$, we can define $g_{\nu}:\N  \to (0,\infty)$ as $g_{\nu}(i)=\frac{\nu(i-1)}{\nu(i)}$, and for any $i \ge j$
\begin{equation}
\tilde{r}_{ij}=\frac{\nu(j)\nu(i-j)}{\sum_{k=0}^i\nu(k)\nu(i-k)}= \Big( \sum_{k=0}^i\frac{\nu(k)\nu(i-k)}{\nu(i)} \Big)^{-1}\frac{\nu(j)\nu(i-j)}{\nu(i)} =\frac{1}{Z_i} \binom{g_{\nu}(i)}{g_{\nu}(j)} \end{equation}
by the definition (\ref{eq:r}). Namely, $\tilde{R}_{\nu}=A_{g_{\nu}}$.
Then, the condition (\ref{eq:conditiontilder}) is equivalent to $\inf \{ Z_{2i}; i \ge 1 \} >2$. Since $Z_{2i} \ge 2+ \binom{g_{\nu}(2i)}{g_{\nu}(1)} = 2+\frac{g_{\nu}(2i)}{g_{\nu}(1)}$, it is equivalent to $ \inf \{ g_{\nu}(2i) ; i \ge 1 \} >0$.


So, we first study a sufficient and necessary condition for $A_g \in V_P(\infty)$ in terms of $g$ where $g$ is not necessarily given as $g_{\nu}$ by some probability measure $\nu$.


\begin{remark}
In the case $g(i)=\a$ and $g(i)=\a i$ for some constant $\a>0$, the matrix $A_g$ is equal to $T$ and $S$ in Example \ref{exm:TS}, respectively. In terms of $\nu$, in the case $\nu$ is a geometric distribution and a poisson distribution, the matrix $\tilde{R}_{\nu}=A_{g_{\nu}}$ is equal to $T$ and $S$ in Example \ref{exm:TS}, respectively.
\end{remark}

\begin{remark}
The function $g$ has an important meaning in the zero-range process, which is one of the most important interacting particle systems. The positive number $g(i)$ represents the \lq\lq jump rate" of a particle under the condition that there are $i$ particles at the same site of the particle. We refer to \cite{KL} for the precise description of zero-range processes. For the known results on the spectral gap estimates for zero-range processes, we refer to \cite{LSV,M}.
\end{remark}

\begin{lemma}
Let $c >0$ and $\tilde{g}(i)=c g(i)$. Then $A_{g}=A_{\tilde{g}}$.
\end{lemma}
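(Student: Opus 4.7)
The plan is to trace the factor $c$ through each piece of the definition of $A_g$ and observe that it cancels identically, leaving the matrix entries unchanged. The key arithmetic fact is that the ``factorial'' $g(i)!$ is the product of exactly $i$ values of $g$, so replacing $g$ by $\tilde g = cg$ multiplies $g(i)!$ by $c^i$.

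First I would verify this multiplicative identity: from $g(i)! = g(i)g(i-1)\cdots g(1)$ and $g(0)!=1$, we get $\tilde g(i)! = c^i\, g(i)!$ for every $i\in\N_0$. Substituting into the binomial-like coefficient, the powers of $c$ telescope:
\begin{equation*}
\binom{\tilde g(i)}{\tilde g(j)} = \frac{\tilde g(i)!}{\tilde g(j)!\,\tilde g(i-j)!} = \frac{c^{i}\,g(i)!}{c^{j}g(j)!\cdot c^{i-j}g(i-j)!} = \binom{g(i)}{g(j)},
\end{equation*}
so the coefficient itself is scale-invariant. Summing over $j$ then yields $\tilde Z_i = Z_i$, and dividing gives $(A_{\tilde g})_{ij} = (A_g)_{ij}$ for all $i\ge j$; the vanishing entries for $i<j$ match trivially.

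There is essentially no obstacle here — the statement is a purely homogeneity-type computation showing that the definition of $A_g$ depends only on the projective class of $g$ in $(0,\infty)^{\N}$ under positive scalar multiplication. The only thing to be careful about is using the non-standard conventions $g(i)! = \prod_{k=1}^{i} g(k)$ and $\binom{g(i)}{g(j)} = g(i)!/(g(j)!\,g(i-j)!)$ rather than confusing them with their combinatorial namesakes. Once the identity $\tilde g(i)! = c^i g(i)!$ is recorded, the conclusion follows in one line.
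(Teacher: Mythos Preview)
Your argument is correct and is exactly the straightforward homogeneity computation the paper has in mind; the paper's own proof consists of the single word ``Straightforward.'' You have simply written out the one-line cancellation $\tilde g(i)! = c^i g(i)!$ that makes the binomial-type coefficients, and hence $Z_i$ and $(A_g)_{ij}$, scale-invariant.
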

\begin{proof}
Straightforward.
\end{proof}

So, from now on we only consider a function $g$ satisfying $g(1)=1$. 

\begin{theorem}\label{thm:unique}
For each $s \in (0,\infty)$, there can be at most one function $g$ satisfying $g(1)=1$, $g(2)=s$ and $A_g \in V_P(\infty)$.
\end{theorem}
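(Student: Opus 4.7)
The plan is to use the stochastic–symmetric classification from the previous subsection to reduce the uniqueness of $g$ to a question about the moments of an associated symmetric probability measure on $[0,1]$. Since $A_g$ is by construction stochastic and symmetric, the hypothesis $A_g \in V_P(\infty)$ gives, via the stochastic–symmetric case, a unique symmetric $\mu \in \mathcal{P}([0,1])$ with $A_g = A_\mu$. Writing $m_i = \int_0^1 u^i\,\mu(du)$ and equating the entries of $A_g$ and $A_\mu$ in the columns $j = 0$ and $j = 1$ (using $g(1) = 1$) yields $Z_i = 1/m_i$ and
\[
g(i) = \frac{i(m_{i-1}-m_i)}{m_i} \qquad (i \ge 1),
\]
so $g$ is completely determined by $\mu$. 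The normalisation $m_0 = 1$, the symmetry condition $m_1 = 1/2$, and the hypothesis $g(2) = s$ then fix $m_2 = 1/(s+2)$.

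The heart of the argument is to show that the remaining moments $m_3, m_4, \dots$ are uniquely determined by $s$. For this I would compare the entries in column $j = 2$. Using $\binom{g(i)}{g(2)} = g(i)g(i-1)/s$ on one side and $\beta_{2,i-2} = m_{i-2} - 2 m_{i-1} + m_i$ (obtained by expanding $u^2 = 1 - 2(1-u) + (1-u)^2$) on the other, and substituting the formula for $g(i), g(i-1)$, the identity $(A_g)_{i,2} = (A_\mu)_{i,2}$ collapses to the three-term relation
\[
s\,m_{i-1}\bigl(m_{i-2} - 2 m_{i-1} + m_i\bigr) = 2\,(m_{i-1} - m_i)(m_{i-2} - m_{i-1}).
\]
This is linear in $m_i$ with leading coefficient $s\,m_{i-1} + 2(m_{i-2} - m_{i-1})$, which is strictly positive because $s, m_{i-1} > 0$ and moments of a probability measure on $[0,1]$ are nonincreasing. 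Hence $m_i$ is uniquely determined from $m_{i-2}, m_{i-1}$ and $s$, and inductively the full moment sequence is determined by $s$. Since a measure on $[0,1]$ is determined by its moments, $\mu$ is unique, and therefore so is $g$.

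The main obstacle is the algebraic derivation of the three-term recursion and the observation that its leading coefficient is nonzero; once these are in place, the rest is essentially bookkeeping. A subordinate point is that one must first invoke the earlier stochastic–symmetric classification in order even to speak of the measure $\mu$ and its moments.
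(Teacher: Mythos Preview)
Your proof is correct, but it takes a genuinely different route from the paper's.

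The paper argues directly from the single off-diagonal vanishing condition $(P^{-1}A_gP)_{i,i-1}=0$. A short computation gives
\[
(g(i)+i)\,Z_{i-1}=i\,Z_i,
\]
and writing $Z_i=2+g(i)\tilde Z_{i-1}$ with $\tilde Z_{i-1}=\sum_{j=1}^{i-1}\frac{g(i-1)!}{g(j)!g(i-j)!}$ (which depends only on $g(1),\dots,g(i-1)$) turns this into
\[
g(i)\bigl(i\tilde Z_{i-1}-Z_{i-1}\bigr)=i\bigl(Z_{i-1}-2\bigr).
\]
Since $Z_{i-1}>2$ for $i\ge 3$, any positive solution $g(i)$ is unique, and the induction closes. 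No measure, no moments, no Hausdorff theorem.

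Your approach instead invokes the stochastic--symmetric classification to produce a symmetric $\mu$ with $A_g=A_\mu$, reads off $Z_i=1/m_i$ and $g(i)=i(m_{i-1}-m_i)/m_i$ from the $j=0,1$ columns, and then uses the $j=2$ column to obtain a three-term moment recursion whose leading coefficient $s\,m_{i-1}+2(m_{i-2}-m_{i-1})$ is positive by monotonicity of moments on $[0,1]$. It is worth noting that the paper's relation $(g(i)+i)Z_{i-1}=iZ_i$, once rewritten via $Z_i=1/m_i$, is exactly your formula $g(i)=i(m_{i-1}-m_i)/m_i$; the two proofs diverge only in how they close the recursion (the paper via $\tilde Z_{i-1}$, you via the second moment column). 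The paper's version is more elementary and self-contained, and its explicit recursion \eqref{eq:relation} is reused verbatim in the $s>2$ nonexistence argument that follows; your version is heavier in prerequisites but ties the uniqueness transparently to the moment sequence of a single measure on $[0,1]$.
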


\begin{proof}
By the direct computation, if $A_g \in V_P(\infty)$, we have
\begin{align*}
 (P^{-1}A_g P)_{i,i-1} & =-i\frac{1}{Z_{i-1}}\binom{g(i-1)}{g(i-1)} + \frac{1}{Z_{i}}\binom{g(i)}{g(i-1)} + i\frac{1}{Z_i}\binom{g(i)}{g(i)} \nonumber  \\ 
& 
 = -\frac{i}{Z_{i-1}} + \frac{1}{Z_{i}} g(i) + \frac{i}{Z_i} = 0 
\end{align*}
which should be rewritten as 
\begin{equation}
\big(g(i)+i\big)Z_{i-1}=iZ_i \label{eq:Z}
\end{equation}
for any $i \ge 2$.

Let $\tilde{Z}_i:=\sum_{j=1}^i\frac{g(i)!}{g(j)!g(i+1-j)!}$. Note that $\tilde{Z_i}$ depends only on $g(1), g(2), \dots, g(i)$. Then, we have 
\begin{equation*}
Z_i=\sum_{j=0}^i \binom{g(i)}{g(j)}=2+ \sum_{j=1}^{i-1} \binom{g(i)}{g(j)}=2+g(i) \sum_{j=1}^{i-1}\frac{g(i-1)!}{g(j)!g(i-j)!}=2+g(i)\tilde{Z}_{i-1}
\end{equation*}
for $i \ge 2$. Therefore, (\ref{eq:Z}) is rewritten as
$\big(g(i)+i\big)Z_{i-1}=i(2+g(i)\tilde{Z}_{i-1})$
and therefore 
\begin{equation}
g(i)(i\tilde{Z}_{i-1} - Z_{i-1})=i(Z_{i-1}-2). \label{eq:relation}
\end{equation}
Given $i \ge 3$ and $g(1), g(2), \dots g(i-1) >0 $, it is obvious that $Z_{i-1} >2$. Therefore $g(i)>0$ satisfying (\ref{eq:Z}) exists if and only if $i\tilde{Z}_{i-1} - Z_{i-1} >0$ and it is determined uniquely by $g(1), g(2), \dots g(i-1)$.
\end{proof}

\begin{theorem}\label{thm:beta}
Given $0<s<2$, let $t=\frac{s}{2-s}$ and $g(i)=\frac{it}{i+t-1}=\frac{is}{2(i-1)-(i-2)}$. Then, $g(1)=1$, $g(2)=\frac{2t}{t+1}=s$ and $A_g \in V_P(\infty)$.
\end{theorem}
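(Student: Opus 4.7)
The plan is to identify $A_g$ with the matrix $A_\mu$ of Theorem \ref{thm:stoch}, where $\mu$ is the $\mathrm{Beta}(t,t)$ distribution
\begin{equation*}
\mu(du)=\frac{1}{B(t,t)}\,u^{t-1}(1-u)^{t-1}\,du
\end{equation*}
on $[0,1]$; this is a valid probability measure because $0<s<2$ forces $t=s/(2-s)>0$. Once the identification $A_g=A_\mu$ is established, the conclusion $A_g\in V_P(\infty)$ follows immediately from Theorem \ref{thm:stoch}. The two equalities $g(1)=1$ and $g(2)=s$ are straightforward substitutions into $g(i)=it/(i+t-1)$, using $t=s/(2-s)$.

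The first step will be to rewrite $g(i)!$ and $\binom{g(i)}{g(j)}$ in Gamma-function form. Using $\prod_{k=1}^{i}(k+t-1)=\Gamma(i+t)/\Gamma(t)$, I expect
\begin{equation*}
g(i)!=\prod_{k=1}^{i}\frac{kt}{k+t-1}=\frac{t^i\,i!\,\Gamma(t)}{\Gamma(i+t)},
\end{equation*}
which after routine cancellation yields
\begin{equation*}
\binom{g(i)}{g(j)}=\binom{i}{j}\,\frac{\Gamma(j+t)\,\Gamma(i-j+t)}{\Gamma(t)\,\Gamma(i+t)}.
\end{equation*}
On the other hand, a direct Beta-integral computation gives
\begin{equation*}
(A_\mu)_{ij}=\binom{i}{j}\int_0^1 u^j(1-u)^{i-j}\mu(du)=\binom{i}{j}\,\frac{\Gamma(2t)\,\Gamma(j+t)\,\Gamma(i-j+t)}{\Gamma(t)^2\,\Gamma(i+2t)}
\end{equation*}
for $0\le j\le i$.

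Since $(A_g)_{ij}=Z_i^{-1}\binom{g(i)}{g(j)}$, matching $A_g$ with $A_\mu$ reduces to proving
\begin{equation*}
Z_i=\frac{\Gamma(t)\,\Gamma(i+2t)}{\Gamma(2t)\,\Gamma(i+t)}.
\end{equation*}
I would verify this by substituting the closed form for $\binom{g(i)}{g(j)}$ into $Z_i=\sum_{j=0}^{i}\binom{g(i)}{g(j)}$, writing $\Gamma(j+t)\Gamma(i-j+t)=\Gamma(i+2t)\int_0^1 u^{j+t-1}(1-u)^{i-j+t-1}\,du$, and interchanging sum with integral; the binomial theorem collapses $\sum_{j}\binom{i}{j}u^j(1-u)^{i-j}$ to $1$, leaving the Beta integral $B(t,t)=\Gamma(t)^2/\Gamma(2t)$. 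Equivalently, the identity for $Z_i$ is forced by the fact that $A_\mu$ is automatically row-stochastic. With $A_g=A_\mu$ in hand, Theorem \ref{thm:stoch} completes the proof.

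The argument is essentially all computation; the only place for care is the Gamma-function bookkeeping and the boundary cases $i=0,1$, where one checks directly that $Z_0=1$ and $Z_1=2$ match the claimed formula. There is no conceptual obstacle, because the ratio $g(i)=it/(i+t-1)$ is precisely the pattern that reproduces the moments of $\mathrm{Beta}(t,t)$ via $g(i)!\propto t^i\Gamma(t)/\Gamma(i+t)$, so the Beta$(t,t)$ ansatz for $\mu$ is essentially forced by the form of $g$.
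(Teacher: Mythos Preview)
Your proposal is correct and follows essentially the same route as the paper: identify $A_g$ with $A_{\mu}$ for $\mu=\mathrm{Beta}(t,t)$, compute $g(i)!$ and $(A_\mu)_{ij}$ in Gamma form, and then recover the needed formula for $Z_i$ from the fact that both $A_g$ and $A_\mu$ are row-stochastic. The paper's argument is exactly this, including the final row-sum observation you flag as the quickest way to pin down $Z_i$.
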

\begin{proof}
Let $\mu_{t}$ be the beta distribution with probability density $B(t,t)^{-1}x^{t-1}(1-x)^{t-1}dx$ on $[0,1]$. We will show that $A_{\mu_t}=A_g$. Then $A_g \in V_P(\infty)$ follows from Theorem \ref{thm:stoch}.

By the direct computation, for $i \ge j$
\begin{align*}
& (A_{\mu_t})_{ij}=\frac{1}{B(t,t)}\binom{i}{j} \int_0^1 x^{j+t-1} (1-x)^{i-j+t-1} dx \\
&= \frac{\Gamma(2t)}{\Gamma(t)\Gamma(t)}\frac{\Gamma(i+1)}{\Gamma(j+1)\Gamma(i-j+1)}B(j+t,i-j+t) \\
& =\frac{\Gamma(2t)\Gamma(i+1)\Gamma(j+t)\Gamma(i-j+t)}{\Gamma(t)\Gamma(t)\Gamma(j+1)\Gamma(i-j+1)\Gamma(i+2t)}.
\end{align*}
On the other hand, $g(n)!=\frac{ n! t^n}{(n+t-1)(n+t-2) \cdots t}=t^n\frac{\Gamma(n+1)\Gamma(t)}{\Gamma(n+t)}$.
Therefore, for $i \ge j$
\begin{equation}
(A_g)_{ij}=\frac{1}{Z_i} \binom{g(i)}{g(j)} = \frac{1}{Z_i} \frac{\Gamma(i+1)\Gamma(t)}{\Gamma(i+t)} \frac{\Gamma(j+t)}{\Gamma(j+1)\Gamma(t)}\frac{\Gamma(i-j+t)}{\Gamma(i-j+1)\Gamma(t)}.
\end{equation}
To prove $A_{\mu_t}=A_g$ we only need $Z_i= \frac{\Gamma(2t)\Gamma(i+t)}{\Gamma(t)\Gamma(i+2t)}=\frac{(2t)_i}{(t)_i}$ where $(t)_i= \prod_{k=0}^{i-1}(k+t)$. By the definition of $A_{\mu_t}$ and $A_g$, $\sum_{j=0}^i (A_{\mu_t})_{ij} = 1= \sum_{j=0}^i (A_g)_{ij}$ for any $i \in\N_0$, so we complete the proof.
\end{proof}

\begin{theorem}
Let $g(i)=i$. Then $g(1)=1, g(2)=2$ and $A_g \in V_P(\infty)$.
\end{theorem}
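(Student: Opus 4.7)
The plan is to identify $A_g$ for $g(i)=i$ with the matrix $S$ from Example \ref{exm:TS}, and then invoke Theorem \ref{thm:stoch} together with Example \ref{ex:TSproof}.

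First I would unwind the generalized factorial appearing in the definition of $A_g$. For $g(i)=i$ the definition gives
\[
g(i)! = g(i)g(i-1)\cdots g(1) = i(i-1)\cdots 1 = i!,
\]
the ordinary factorial, so that $\binom{g(i)}{g(j)}$ reduces to the ordinary binomial coefficient $\binom{i}{j}$. Consequently $Z_i = \sum_{j=0}^i \binom{i}{j} = 2^i$, and therefore $(A_g)_{ij} = 2^{-i}\binom{i}{j}\mathbf{1}_{\{i\ge j\}}$. This matches exactly the entries of $S$ in Example \ref{exm:TS}, and the normalizations $g(1)=1$, $g(2)=2$ can be read off directly.

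Second, Example \ref{ex:TSproof} identifies $S$ with $A_{\delta_{1/2}}$. Since $\delta_{1/2}$ is a Borel probability measure on $[0,1]$, Theorem \ref{thm:stoch} then yields $A_{\delta_{1/2}}\in V_P(\infty)$, which is the desired conclusion.

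I do not expect any genuine obstacle: the content of the theorem is really the observation that the generalized binomial coefficients collapse to the ordinary ones when $g$ is the identity, after which the preceding structural results apply directly. As a sanity check one can also note that $g(i)=i$ is the formal $t\to\infty$ limit of $g_t(i)=\tfrac{it}{i+t-1}$ from Theorem \ref{thm:beta}, which is consistent with the fact that the Beta$(t,t)$ distribution concentrates at $\tfrac{1}{2}$ as $t\to\infty$.
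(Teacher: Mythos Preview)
Your proposal is correct and matches the paper's own argument: the paper simply states that $A_g=A_{\delta_{1/2}}$ and invokes Theorem~\ref{thm:stoch}, while you spell out the intermediate identification $A_g=S$ via $g(i)!=i!$ and $Z_i=2^i$ before using Example~\ref{ex:TSproof}. The extra remark about the $t\to\infty$ limit is a nice consistency check but not needed for the proof.
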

\begin{proof}
It follows from $A_g=A_{\delta_{\frac{1}{2}}}$ straightforwardly.
\end{proof}

\begin{theorem}
Let $s >2$. Then there does not exist any function $g$ such that $g(1)=1$, $g(2)=s$ and $A_g \in V_P(\infty)$.
\end{theorem}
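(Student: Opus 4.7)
The approach is to leverage the characterization theorem for $V_P(\infty) \cap M_{L,stoch} \cap M_{L,sym}$ proved in the previous subsection, converting the non-existence claim into a moment inequality that fails when $s > 2$.

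First I would observe that for any function $g : \N \to (0,\infty)$, the matrix $A_g$ is automatically stochastic and symmetric. Stochasticity is built into the normalization by $Z_i$; symmetry $(A_g)_{i,j} = (A_g)_{i,i-j}$ follows from $\binom{g(i)}{g(j)} = \binom{g(i)}{g(i-j)}$, which is immediate from the definition $\binom{g(i)}{g(j)} = \frac{g(i)!}{g(j)!g(i-j)!}$. So if in addition $A_g \in V_P(\infty)$, then $A_g \in V_P(\infty) \cap M_{L,stoch} \cap M_{L,sym}$, and by the characterization theorem there exists a probability measure $\mu \in \mathcal{P}([0,1])$ invariant under the reflection $u \mapsto 1-u$ such that $A_g = A_\mu$. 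Comparing diagonal entries yields
\begin{equation*}
\frac{1}{Z_i} \;=\; (A_g)_{ii} \;=\; (A_\mu)_{ii} \;=\; \int_0^1 u^i \, \mu(du).
\end{equation*}

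Suppose for contradiction that such a $g$ exists with $g(1)=1$ and $g(2)=s$. Direct computation from the definition of $Z_i$ gives $Z_1 = 1 + 1 = 2$ and $Z_2 = 1 + g(2) + 1 = 2+s$, so the above identity at $i=1,2$ reads $\int_0^1 u\,\mu(du) = 1/2$ and $\int_0^1 u^2\,\mu(du) = \frac{1}{2+s}$. Cauchy--Schwarz (equivalently, non-negativity of variance) forces
\begin{equation*}
\int_0^1 u^2 \, \mu(du) \;\geq\; \Bigl(\int_0^1 u \, \mu(du)\Bigr)^2 \;=\; \frac{1}{4},
\end{equation*}
so $\frac{1}{2+s} \geq \frac{1}{4}$, i.e.\ $s \leq 2$, contradicting $s > 2$.

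There is no real obstacle here once the characterization theorem for the stochastic and symmetric case is invoked; the entire argument reduces to the elementary fact that a probability measure on $[0,1]$ with mean $1/2$ cannot have second moment smaller than $1/4$. The only routine items to verify are the computation of $Z_1$ and $Z_2$, and the observation (which was implicit already in Theorem~\ref{thm:stoch} and the preceding lemma on $A_\mu \in M_{L,sym}$) that $A_g$ is always stochastic and symmetric whenever $g$ takes positive values.
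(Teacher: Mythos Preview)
Your proof is correct and takes a genuinely different route from the paper's. The paper argues via the uniqueness result (Theorem~\ref{thm:unique}) and the explicit family in Theorem~\ref{thm:beta}: it defines the rational functions $f_i(z)=\frac{iz}{2(i-1)-(i-2)z}$, uses the identity theorem to extend the recursion~(\ref{eq:relation}) from $z\in(0,2)$ to all of $\C$, concludes that any admissible $g$ with $g(2)=s$ must satisfy $g(i)=f_i(s)$, and then observes that for $s>2$ the denominator $2(i-1)-(i-2)s$ eventually becomes negative, forcing $g(i)<0$. Your argument bypasses all of this by invoking the Hausdorff-type characterization of $V_P(\infty)\cap M_{L,stoch}$ from Section~2: once $A_g=A_\mu$ for some $\mu\in\mathcal{P}([0,1])$, the diagonal entries give $\int u\,\mu(du)=1/Z_1=1/2$ and $\int u^2\,\mu(du)=1/Z_2=1/(2+s)$, and the variance inequality $\int u^2\,\mu(du)\ge(\int u\,\mu(du))^2$ immediately yields $s\le 2$. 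This is shorter and more conceptual; it also shows that the symmetry of $A_g$ (and hence the reflection invariance of $\mu$) is not actually needed---the stochastic characterization alone suffices. What the paper's approach buys, on the other hand, is that it identifies the would-be function $g$ explicitly and pinpoints exactly where positivity fails, staying within the machinery developed in Section~3 rather than appealing back to Section~2.
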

\begin{proof}
Let us consider a sequence of rational functions $\{f_i\}_{i \in \N}$ on $\C$ defined as $f_i(z)=\frac{iz}{2(i-1)-(i-2)z}$. Let $\{V_i\}$ and $\{\tilde{V}_i\}$ be the sequences of rational functions as
\begin{equation}
V_i(z)=\sum_{j=0}^i \binom{f_i(z)}{f_j(z)}=\sum_{j=0}^i \frac{f_i(z)!}{f_j(z)!f_{i-j}(z)!}
\end{equation}
\begin{equation}
\tilde{V}_i(z)=\sum_{j=1}^i\frac{f_i(z)!}{f_j(z)!f_{i+1-j}(z)!}
\end{equation}
where $f_i(z)!=f_i(z)f_{i-1}(z) \cdots f_1(z)$. Then, by Theorem \ref{thm:beta}, $f_i(z)(i\tilde{V}_{i-1}(z) - V_{i-1}(z))=i(V_{i-1}(z)-2)$ holds for $z \in (0,2)$. Then, by the identity theorem, $f_i(z)(i\tilde{V}_{i-1}(z) - V_{i-1}(z))=i(V_{i-1}(z)-2)$ for all $z \in \C$. 

Now, for given $s >2$, assume that there exists a function $g:\N \to (0,\infty)$ such that $g(1)=1$, $g(2)=s$ and $A_g \in V_P(\infty)$. Then, since $g(1)=f_1(s)$, $g(2)=f_2(s)$ and $g(i)$ must satisfy (\ref{eq:relation}) by Theorem \ref{thm:unique}, for all $i\in \N$, $g(i)=f_i(s)$. However, we have $f_i(s)  <0$ for large enough $i$, which implies a contradiction. 
\end{proof}

From the preceding theorems, we have a sufficient and necessary condition for $A_g \in V_P(\infty)$ in terms of $g$.
\begin{theorem}\label{thm:gcondition}
Let $g: \N \to (0,\infty)$. Then, $A_g \in V_P(\infty)$ if and only if $g=\a G_t$ for some $t>0$ and $\a >0$ where $G_t(i)=\frac{it}{i+t-1}$ or $g=\a Id$ for some $\a>0$. 
\end{theorem}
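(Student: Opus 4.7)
The proof is essentially a synthesis of the four results just proved (Theorem \ref{thm:unique}, Theorem \ref{thm:beta}, the $s=2$ theorem, and the $s>2$ theorem) together with the scaling lemma $A_{cg}=A_g$, so no new computation is needed.

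For the "if" direction, the plan is to verify each family directly by invoking earlier theorems. If $g=\alpha G_t$, then by the scaling lemma $A_g = A_{G_t}$; since $G_t(1)=1$ and $G_t(2)=\frac{2t}{t+1}\in(0,2)$, Theorem \ref{thm:beta} gives $A_{G_t}\in V_P(\infty)$. If $g=\alpha\,\mathrm{Id}$, then $A_g = A_{\mathrm{Id}}$, and the preceding theorem (the case $g(i)=i$, where $A_g=A_{\delta_{1/2}}$) gives $A_g\in V_P(\infty)$.

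For the "only if" direction, assume $A_g\in V_P(\infty)$. I would first normalize: by the scaling lemma, replacing $g$ by $g/g(1)$ does not change $A_g$, so without loss of generality $g(1)=1$. Set $s:=g(2)>0$. The $s>2$ theorem immediately rules out $s>2$, leaving two cases. If $0<s<2$, set $t=\frac{s}{2-s}>0$; then $G_t$ is a function with $G_t(1)=1$, $G_t(2)=s$, and $A_{G_t}\in V_P(\infty)$ by Theorem \ref{thm:beta}. The uniqueness statement of Theorem \ref{thm:unique} forces $g=G_t$. If $s=2$, then $\mathrm{Id}$ satisfies $\mathrm{Id}(1)=1$, $\mathrm{Id}(2)=2$, and $A_{\mathrm{Id}}\in V_P(\infty)$, so Theorem \ref{thm:unique} forces $g=\mathrm{Id}$. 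Undoing the normalization yields $g=\alpha G_t$ or $g=\alpha\,\mathrm{Id}$ for the original $g$, with $\alpha=g(1)$.

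There is no main obstacle: every nontrivial ingredient (the recursion \eqref{eq:relation} giving uniqueness, the explicit computation identifying $A_{G_t}$ with $A_{\mu_t}$ for the beta measure, the analytic-continuation argument ruling out $s>2$) has already been carried out. The role of this final theorem is only to package the exhaustive case analysis on $s=g(2)/g(1)$ into the clean dichotomy stated.
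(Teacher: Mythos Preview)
Your proposal is correct and matches the paper's approach exactly: the paper presents this theorem with no separate proof, simply stating that it follows from the preceding results, and you have spelled out precisely that synthesis (scaling lemma to normalize $g(1)=1$, case split on $s=g(2)$ using Theorems~\ref{thm:unique}, \ref{thm:beta}, the $s=2$ case, and the $s>2$ case).
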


\begin{remark}
By Theorem \ref{thm:gcondition} and the proof of Theorem \ref{thm:beta}, we have 
\begin{align*}
\{ A_g ; & \ g :\N \to (0,\infty)\} \cap V_P(\infty) \\
& = \{ A_{\mu_t}; \ \mu_t \ \text{is the beta distribution with parameters} \ (t,t), t \in (0,\infty] \}.
\end{align*}
From the viewpoint of infinite exchangeable sequences in Remark \ref{rem:exchangeable}, the above class is exactly the same as those obtained form the classical P\'olya urn model. Precisely, consider the P\'olya urn initially with same number of black and white balls. Then, we get an infinite sequence of random variables $Y_1,Y_2,\ldots$ by letting $Y_i=1$ if the ball at $i$-th drawing is white and $Y_i=0$ if it is black. The sequence obtained by this procedure is exchangeable and satisfies
\begin{align*}
\text{Prob} & (Y_1=a_1,\ldots,Y_i=a_i) \\
& =\int_0^1u^j(1-u)^{i-j} \mu_t(du)=\int_0^1u^j(1-u)^{i-j} \frac{u^{t-1}(1-u)^{t-1}}{B(t,t)}du
\end{align*}
when $(a_1,a_2,\ldots,a_i) \in \{0,1\}^i$ and $\sum_{k=1}^i a_k= j$ (cf. \cite{F}). Here, the parameter $t$ represents the ratio of the initial number of white balls ($=$ that of black balls) to the number of balls added after each drawing. The case where no ball is added corresponds to $t=\infty$. 
\end{remark}

Finally, we give a sufficient and necessary condition for $\tilde{R}_{\nu}=A_{g_{\nu}} \in V_P(\infty)$ in terms of $\nu$.

\begin{theorem}
Let $\nu$ be a probability measure on $\N_0$ satisfying $\nu(i) >0$ for all $i \in \N_0$. Then, $\tilde{R}_{\nu} \in V_P(\infty)$ if and only if $\nu$ is a negative binomial distribution or a poisson distribution. Moreover, if so, (\ref{eq:conditionsup}) holds.
\end{theorem}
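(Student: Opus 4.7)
The strategy is to combine Theorem \ref{thm:gcondition} with the explicit identity $\tilde R_\nu = A_{g_\nu}$ established just before the theorem statement, where $g_\nu(i) = \nu(i-1)/\nu(i)$. This reduces the entire classification to solving the recurrence $\nu(i-1) = g_\nu(i)\,\nu(i)$ for each of the two admissible shapes of $g$ furnished by Theorem \ref{thm:gcondition}: either $g_\nu(i) = \alpha i$ for some $\alpha>0$, or $g_\nu(i) = \alpha G_t(i) = \alpha it/(i+t-1)$ for some $\alpha, t > 0$.

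First I would handle the Poisson case. The recurrence $\nu(i) = \nu(i-1)/(\alpha i)$ iterates to $\nu(i) = \nu(0)\alpha^{-i}/i!$, and the constraint $\sum_i \nu(i) = 1$ forces $\nu(0) = e^{-1/\alpha}$, so $\nu$ is Poisson with parameter $1/\alpha$. Conversely a Poisson law plainly gives $g_\nu(i) = \alpha i$. For the second shape, the recurrence $\nu(i) = \nu(i-1)(i+t-1)/(\alpha it)$ telescopes to
\[
\nu(i) = \nu(0)(\alpha t)^{-i}\binom{i+t-1}{i},
\]
and the identity $\sum_i \binom{i+t-1}{i} p^i = (1-p)^{-t}$ with $p=(\alpha t)^{-1}$ forces $\alpha t > 1$ for convergence, yielding $\nu(0) = (1-(\alpha t)^{-1})^t$ and identifying $\nu$ as the negative binomial distribution with parameters $(t,(\alpha t)^{-1})$. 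Conversely, for $\nu = \mathrm{NB}(r,p)$ a direct calculation gives $g_\nu(i) = i/(p(i+r-1)) = (pr)^{-1}G_r(i)$, so $g_\nu$ lies in the required family. The only real bookkeeping subtlety is matching the parametrizations $(\alpha,t)\leftrightarrow(r,p)$ and confirming that $\alpha t > 1$ corresponds precisely to $p\in(0,1)$.

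For the moreover clause I would use $\tilde r_{ii} = 1/Z_i$ together with (\ref{eq:conditiontilder}) to rephrase (\ref{eq:conditionsup}) as $\inf_{i\ge 1} Z_{2i} > 2$. Keeping only the $j=0,\,1,\,i$ terms in the sum $Z_i = \sum_{j=0}^i \binom{g(i)}{g(j)}$ yields $Z_i \ge 2 + g_\nu(i)/g_\nu(1)$ for $i\ge 2$, so it suffices to prove $\inf_{i\ge 1} g_\nu(2i)/g_\nu(1) > 0$. In the Poisson case this ratio equals $2i$, uniformly at least $2$. In the negative binomial case it equals $2it/(2i+t-1)$, whose infimum over $i\ge 1$ is $2t/(t+1)$ if $t\ge 1$ and $t$ if $t<1$; both are strictly positive since $t>0$. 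No single step is difficult; the proof is essentially two routine recurrence solutions plus a uniform lower bound on $g_\nu$ along even indices.
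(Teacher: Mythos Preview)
Your proposal is correct and follows essentially the same route as the paper: both reduce to Theorem~\ref{thm:gcondition} via $\tilde R_\nu = A_{g_\nu}$, solve the recurrence $\nu(i-1)=g_\nu(i)\nu(i)$ for the two admissible shapes of $g$, and then verify the ``moreover'' clause by bounding $Z_{2i}$ below by $2 + g_\nu(2i)/g_\nu(1)$ (this reduction is stated in the paper just before the theorem). Your treatment of the final inequality is in fact more explicit than the paper's, which simply notes that $\inf_i g_\nu(i) > 0$ is obvious in both cases; the only cosmetic discrepancy is the negative-binomial parametrization (you use the success probability $(\alpha t)^{-1}$ where the paper uses $1-(\alpha t)^{-1}$).
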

\begin{proof}
First, assume $\tilde{R}_{\nu}=A_{g_{\nu}} \in V_P(\infty)$. Then, by Theorem \ref{thm:gcondition}, $g_{\nu}$ must be $\a G_t $ or $\a Id$. If $g_{\nu}=\a G_t$, then $\nu(i)=\nu(0) \prod_{j=1}^{i} (\a G_t(j))^{-1} = \nu(0) (\a t)^{-i} \frac{(t)_i}{i!}$. Since $\nu$ is a probability measure, $\nu(0)$ must be $p^t$ where $p=1-\frac{1}{\a t}$ with $\a t>1$. Namely, $\nu$ is the negative binomial distribution with parameters $(t,1-\frac{1}{\a t})$ and in particular, if $t=1$, then $\nu$ is the geometric distribution with parameter $1-\frac{1}{\a}$. Also, if $g_{\nu}=\a Id$, then $\nu$ is the poisson distribution with parameter $\frac{1}{\a}$. 

On the other hand, if $\nu$ is the negative binomial distribution with parameters $(t,p)$, then $g_{\nu} =\frac{1}{(1-p)t} G_t$. In particular, if $\nu$ is the geometric distribution with parameter $p$ then $g_{\nu}=\frac{1}{1-p}G_1$. Also, if $\nu$ is the poisson distribution with parameter $\lambda$, then $g_{\nu}=\frac{1}{\lambda} Id$. Namely, for all the cases $\tilde{R}_{\nu}=A_{g_{\nu}} \in V_P(\infty)$ by Theorem \ref{thm:gcondition}.

Obviously, for all the cases, $\inf_{i \in \N} \{ g_{\nu}(i) \} >0$, so (\ref{eq:conditiontilder}) holds, and hence (\ref{eq:conditionsup}) holds.
\end{proof}

In this way, we found a sufficient condition for (\ref{eq:conditionsup}) in terms of $\nu$, but it is still restrictive, so to find a sufficient condition for more general $\nu$ (or $g$) is still an interesting open problem.

\section*{Acknowledgement}

We would like to thank Professor Takahiko Fujita and Professor Masato Takei for their valuable comments.

\end{document}